\theoremstyle{plain}
\newtheorem{thm}{Theorem}[section]
\newtheorem*{thm*}{Theorem}
\newtheorem{lem}[thm]{Lemma}
\newtheorem*{lem*}{Lemma}
\newtheorem*{cor*}{Corollary}
\newtheorem*{prop*}{Proposition}
\theoremstyle{definition}
\newtheorem{d?f}[thm]{Definition}
\newtheorem*{d?f*}{Definition}
\newtheorem*{rem*}{Remark}
\newtheorem*{ex*}{Example}
\newtheorem{tabl}[thm]{Table}
\newtheorem{tabl*}{Table}
\newcommand \A {\mathbb{A}}%
\newcommand \C {\mathbb{C}}%
\newcommand \F {\mathbb{F}}%
\newcommand \N {\mathbb{N}}%
\newcommand \Q {\mathbb{Q}}%
\newcommand \R {\mathbb{R}}%
\newcommand \Z {\mathbb{Z}}%
\newcommand \cC {\mathcal{C}}%
\newcommand \cG {\mathcal{G}}%
\newcommand \cM {\mathcal{M}}%
\newcommand \cS {\mathcal{S}}%
\newcommand \rA {\mathrm{A}}%
\newcommand \rM {\mathrm{M}}%
\newcommand \ff {\mathfrak{f}}%
\newcommand \fs {\mathfrak{s}}%
\newcommand \fd {\mathfrak{d}}%
\newcommand \fp {\mathfrak{p}}%
\newcommand \fP {\mathfrak{P}}%
\newcommand \fD {\mathfrak{D}}%
\newcommand \im {\operatorname{im}}%
\newcommand \Aut {\operatorname{Aut}}%
\newcommand \SL {\operatorname{SL}}%
\newcommand \GL {\operatorname{GL}}%
\newcommand \SU {\operatorname{SU}}%
\newcommand \SO {\operatorname{SO}}%
\newcommand \rH {\operatorname{H}}%
\newcommand \rR {\operatorname{R}}%
\newcommand{\ol}[1]{\overline{#1}}%
\newcommand{\ot}{\otimes}%
\DeclarePairedDelimiter{\ceil}{\lceil}{\rceil}
\begin{document}

\title{Lattices of minimal covolume in $\SL_n(\R)$}
\author{François Thilmany\thanks{The author is supported by the Fonds National de la Recherche, Luxembourg (AFR grant 11275005)}}
\date{}
\maketitle{}

\begin{abstract}
The objective of this paper is to determine the lattices of minimal covolume in $\SL_n(\R)$, for $n \geq 3$. The answer turns out to be the simplest one: $\SL_n(\Z)$ is, up to automorphism, the unique lattice of minimal covolume in $\SL_n(\R)$. In particular, lattices of minimal covolume in $\SL_n(\R)$ are non-uniform when $n \geq 3$, contrasting with Siegel's result for $\SL_2(\R)$. This answers for $\SL_n(\R)$ the question of Lubotzky: is a lattice of minimal covolume typically uniform or not?
\end{abstract}

\tableofcontents


\section{Introduction}

\subsection{A brief history}

The study of lattices of minimal covolume in $\SL_n$ originated with Siegel's work \cite{Siegel45} on $\SL_2(\R)$. Siegel showed that in $\SL_2(\R)$, a lattice of minimal covolume is given by the $(2,3,7)$-triangle group. He raised the question to determine which lattices attain minimum covolume in groups of isometries of higher-dimensional hyperbolic spaces. For $\SL_2(\C)$, which acts on hyperbolic 3-space, the minimum among non-uniform lattices was established by Meyerhoff \cite{Meyerhoff85}; among all lattices in $\SL_2(\C)$, the minimum was exhibited more recently by Gehring, Marshall and Martin \cite{GehringMartin09,MarshallMartin12}, and is attained by a uniform lattice. 

Lubotzky established the analogous result \cite{Lubotzky90} for $\SL_2\left(\F_q((t^{-1}))\right)$, where this time $\SL_2\left(\F_q[t]\right)$ attains the smallest covolume. Lubotzky observed that in this case, as opposed to the $(2,3,7)$-triangle group in $\SL_2(\R)$, the lattice of minimal covolume is not uniform; he then asked whether, for a lattice of minimal covolume in a semi-simple Lie group, the typical situation is to be uniform, or not. 

Progress has been made on this question, and Salehi Golsefidy showed \cite{Salehi09} that for most Chevalley groups $G$ of rank at least 2, $G\left(\F_q[t]\right)$ is the unique (up to isomorphism) lattice of minimal covolume in $G\left( \F_q((t^{-1}))\right)$. Salehi Golsefidy also obtained \cite{Salehi13} that for most simply connected almost simple groups over $\F_q((t^{-1}))$, a lattice of minimal covolume will be non-uniform (provided Weil's conjecture on Tamagawa numbers holds). 

On the other side of the picture, when the rank is 1, Belolipetsky and Emery \cite{Belolipetsky04, BelolipetskyEmery12} determined the lattices of minimal covolume among arithmetic lattices in \linebreak $\SO(n,1)(\R)$ ($n \geq 4$) and showed that they are non-uniform. For $\SU(n,1)(\R)$, Emery and Stover \cite{EmeryStover14} determined the lattices of minimal covolume among the non-uniform arithmetic ones, but to the best of the author's knowledge, this has not been compared to the uniform arithmetic ones in this case. 
Unfortunately, in the rank 1 case, it is not known whether a lattice of minimal covolume is necessarily arithmetic.

The above results give a partial answer to the question of Lubotzky in these two respective situations. In this paper, we intend to contribute to the question for $\SL_n(\R)$. We show that, up to automorphism, the non-uniform lattice $\SL_n(\Z)$ is the unique lattice of minimal covolume in $\SL_n(\R)$.


\subsection{Outline}

The goal of the present paper is to prove the following theorem.
\begin{thm*}
Let $n \geq 3$ and let $\Gamma$ be a lattice of minimal covolume for some (any) Haar measure in $\SL_n(\R)$. Then $\sigma(\Gamma) = \SL_n(\Z)$ for some (algebraic) automorphism $\sigma$ of $\SL_n(\R)$. 
\end{thm*}

The argument relies in an indispensable way on the important work of Prasad \cite{Prasad89} and Borel and Prasad \cite{BorelPrasad89} (there will be multiple references to results contained in these two articles). We will proceed as follows.

We start with a lattice $\Gamma$ of minimal covolume in $\SL_n(\R)$. Using Margulis' arithmeticity theorem and Rohlfs' maximality criterion, we find a number field $k$, an archimedean place $v_0$ and a simply connected absolutely almost simple $k$-group $G$ for which $\Gamma$ is identified with the normalizer of a principal arithmetic subgroup $\Lambda$ in $G(k_{v_0})$. The latter means that there is a collection of parahoric subgroups $\{P_v\}_{v \in V_f}$ such that $\Lambda$ consists precisely of the elements of $G(k)$ whose image in $G(k_v)$ lies in $P_v$ for all $v \in V_f$. This allows us to express the covolume of $\Gamma$ as $\mu(G(k_{v_0}) / \Gamma) = [\Gamma: \Lambda]^{-1}\; \mu(G(k_{v_0}) / \Lambda)$.  

The factor $\mu(G(k_{v_0}) / \Lambda)$ can be computed using Prasad's volume formula \cite{Prasad89}, and the result depends on the arithmetics of $k$ and of the parahorics $P_v$, as well as on the quasi-split inner form of $G$. 

On the other hand, the index $[\Gamma: \Lambda]$ can be controlled using techniques developed by Rohlfs \cite{Rohlfs79}, and Borel and Prasad \cite{BorelPrasad89}. The bound depends namely on the first Galois cohomology group of the center of $G$ and on its action on the types of the parahorics $P_v$. 

Once we have an estimate on the covolume of $\Gamma$, we can compare it to the covolume of $\SL_n(\Z)$ in $\SL_n(\R)$. We argue that for the former not to exceed the latter, it must be that $k$ is $\Q$, $G$ is an inner form of $\SL_n$, and all the parahorics are hyperspecial. This is carried out in sections \ref{kisQ}-\ref{hyperspecial}. 

Finally, using local-global techniques, we conclude that $\Gamma$ must be the image of $\SL_n(\Z)$ under some automorphism of $\SL_n(\R)$.


\subsection{Acknowledgements}

First of all, the author wishes wholeheartedly to thank Alireza Salehi Golsefidy, to whom the author is greatly indebted for suggesting the subject of the present work, for his precious insight regarding some of the key points at issue, and for his helpful remarks throughout the completion of this project. 

The author also wishes to thank Mikhail Belolipetsky and Gopal Prasad for their very helpful comments, and Jake Postema for interesting conversations about some of the number-theoretical aspects of this work. 

Lastly, the author is very grateful for the financial support of the Fonds National de la Recherche du Luxembourg, that allowed him to devote full attention to this project.


\subsection{Notation and preliminaries}

The contents of the paper will assume familiarity with the theory of algebraic groups, Bruhat-Tits theory and basic number theory. We refer the reader to \cite{PlatonovRapinchuk94} for an exposition of some of these topics and a more complete list of the available literature. \bigbreak

As much as possible, we will follow the notation adopted by Borel and Prasad in \cite{Prasad89} and \cite{BorelPrasad89}. 

\begin{enumerate}[label=$\bullet$,itemsep=0pt]
\item $\N$, $\Q$, $\R$, $\C$ respectively denote the sets of strictly positive natural, rational, real and complex numbers. For $p$ a place or a prime, $\Q_p$ denotes the field of $p$-adic numbers and $\Z_p$ its ring of $p$-adic integers. $\F_p$ denotes the finite field with $p$ elements.
\item In what is to follow, we will fix a number field $k$ of degree $m$, and $V$, $V_\infty$ and $V_f$ will always denote the set of places, archimedean places and non-archimedean places of $k$. We will always normalize each non-archimedean place $v$ so that $\im v = \Z$. 
\item For $v \in V$, $k_v$ will denote the $v$-adic completion of $k$. For $v \in V_f$, $\widehat{k}_v$ is the maximal unramified extension of $k_v$, $\ff_v$ denotes the residue field of $k$ at $v$ and $q_v = \# \ff_v$ is the cardinality of the latter. 
\item $\A_k$ denotes the ring of adeles of $k$, and the adeles of $\Q$ will be abbreviated $\A$. 
\item When working with the adele points $G(\A_k)$ (or variations of them, e.g.~finite adeles) of an algebraic group $G$, we will freely identify $G(k)$ with its image in $G(\A_k)$ under the diagonal embedding, and vice-versa. 
\item For $l$ a finite extension of $k$, we denote $D_l$ the absolute value of the discriminant of $l$ (over $\Q$) and $\fd_{l/k}$ the relative discriminant of $l$ over $k$; $h_l$ is the class number of $l$. The units of $l$ will be denoted by $U_l$, and the subgroup of roots of unity in $l$ by $\mu(l)$. 
\item $G$ will be a simply connected absolutely almost simple group (of type $\mathrm{A}_r$) defined over $k$. We denote $r = n-1$ its absolute rank, and for $v \in V_f$, $r_v$ is its rank over $\widehat{k}_v$. 
\item $\cG$ denotes the quasi-split inner $k$-form of $G$, $l$ will denote its splitting field. 
\item $\SU_n$ denotes the special unitary group defined over $\R$ associated to the positive-definite hermitian form $|z_1| + \dots + |z_n|$ on $\C^n$. Its group $\SU_n(\R)$ of real points is the usual special unitary group, the unique compact connected simply connected almost simple Lie group of type $\rA_{n-1}$. 
\item $\zeta$ denotes Riemann's zeta function. 
\item For $n \in \Z$, we set $\tilde{n} = 1$ or 2 if $n$ is respectively odd or even.
\item For $x \in \R$, $\ceil{x}$ denotes the ceiling of $x$, that is the smallest integer $n$ such that $n \geq x$. 
\item $V_n$ will denote the quantity $\prod_{i=1}^{n-1} \frac{i!}{(2\pi)^{i+1}}$. 
\end{enumerate}


\section{The setting} \label{setup}

On $\SL_n$, we pick a left-invariant exterior form $\omega_0$ of highest degree which is defined over $\Q$. The form $\omega_0$ induces a left-invariant form on $\SL_n(\R)$, also to be denoted $\omega_0$, which in turn induces a left-invariant form on $\SU_n(\R)$ through their common Lie algebra. Let $c_0 \in \R$ be such that $\SU_n(\R)$ has volume 1 for the Haar measure determined in this way by $c_0 \omega_0$; we denote $\mu_0$ the Haar measure given by $c_0 \omega_0$ on $\SL_n(\R)$. 

Computing the covolume of $\SL_n(\Z)$ goes back to Siegel \cite{Siegel45'}, and for this particular measure, it is given by
$$\mu_0(\SL_n(\R) / \SL_n(\Z)) = \left( \prod_{i=1}^r \frac{i!}{(2\pi)^{i+1}} \right) \cdot \prod_{i=2}^{n} \zeta(i).$$
(To obtain this, one can for example use \cite[thm. 3.7]{Prasad89}; see §\ref{volumeformula} below. For the lattice $\Lambda = \SL_n(\Z)$, one can take $P_v = \SL_n(\Z_{v})$, so that $e(P_v) = \frac{(q_v-1)q_v^{n^2-1}}{\prod_{i=0}^{n-1} (q_v^n - q_v^i)} = \prod_{i=2}^n \frac{1}{1 - q_v^{-i}}$ and $\prod_{v \in V_f} e(P_v) = \prod_{i=2}^n \zeta(i)$.) \bigbreak

Let $\Gamma$ be a lattice of minimal covolume for $\mu_0$ in $\SL_n(\R)$ (the existence of such a lattice can be obtained using the Kazhdan-Margulis theorem, see for example \cite{Wang72}); in particular, $\Gamma$ is a maximal lattice. By Margulis' arithmeticity theorem \cite{Margulis91} and Rohlfs' maximality criterion \cite[prop. 1.4]{BorelPrasad89} combined, there is a number field $k$, a place $v_0 \in V_\infty$, a simply connected absolutely almost simple group $G$ defined over $k$, and a parahoric subgroup $P_v$ of $G(k_v)$ for each $v \in V_f$, such that:

\begin{enumerate}[label=(\roman*),itemsep=0pt]
\item $k_{v_0} = \R$,

\item there is an isomorphism $\iota : \SL_n \rightarrow G$ defined over $k_{v_0}$ (in particular, $\SL_n(\R) \cong G(k_{v_0})$),

\item the collection $\{P_v\}_{v \in V_f}$ is coherent, i.e. $\prod_{v \in V_\infty} G(k_v) \times \prod_{v \in V_f} P_v$ is an open subgroup of the adele group $G(\A_k)$,

\item $\iota(\Gamma)$ is the normalizer of the lattice $\Lambda = G(k) \cap \iota(\Gamma)$ in $G(k_{v_0})$, and $\Lambda = G(k) \cap \prod_{v \in V_f} P_v$ is the \emph{principal arithmetic subgroup} determined by the collection $\{P_v\}_{v \in V_f}$. 
\end{enumerate}

This already imposes the signature of $k$ and of the splitting field $l$ of the quasi-split inner form $\cG$ of $G$. Indeed, for any archimedean place $v \neq v_0$, the group $G(k_v)$ must be compact (otherwise $\Lambda$ would be dense in $G(k_{v_0})$ by strong approximation). In consequence, $k_v \cong \R$ for $v \in V_\infty - \{v_0\}$ (otherwise $G(k_v) \cong \SL_n(\C)$ is not compact) and $k$ is totally real. Note that in fact, for each $v \in V_\infty - \{v_0\}$, $G(k_v)$ is isomorphic to $\SU_n(\R)$, the unique compact connected simply connected almost simple Lie group of type $\rA_{n-1}$. 

Recall that since $G$ is of type $\mathrm{A}$, either $l = k$ or $l$ is a quadratic extension of $k$. Regardless, if $v \in V_\infty - \{v_0\}$, it may not be that $l$ embeds into $k_v$: indeed, if this happens, then $\cG$ splits over $k_v$, and thus $G$ would be an inner $k_v$-form of $\SL_n$. This prohibits $G(k_v)$ from being compact, as inner $k_v$-forms of $\SL_n$ are isotropic when $n \geq 3$. Thus, in the former case, when $G$ is an inner $k$-form, it must be that $V_\infty - \{v_0\}$ is empty, i.e.~$l = k = \Q$. In the latter case, when $G$ is an outer $k$-form, for each $v \in V_\infty - \{v_0\}$ the real embedding $k \to k_v$ extends to two (conjugate) complex embeddings of $l$. On the other hand, $G$, hence $\cG$, splits over $k_{v_0}$, thus $l$ embeds in $k_{v_0}$. Combined, we see in this case that the signature of $l$ is $(2, m-1)$. \bigbreak

On $G$, we pick a left-invariant exterior form $\omega$ of highest degree which is defined over $k$. The form $\omega$ induces a left-invariant form on $G(k_{v_0})$, also to be denoted $\omega$, which in turn induces a left-invariant form on $\SU_n(\R)$ through their common Lie algebra. Let $c \in \R$ be such that $\SU_n(\R)$ has volume 1 for the Haar measure determined in this way by $c \omega$; we denote $\mu$ the Haar measure determined by $c \omega$ on $G(k_{v_0})$. By construction, $\mu$ agrees with the measure induced from $\mu_0$ through the isomorphism $\iota$. In what follows, we will freely identify $\SL_n(\R)$ with $G(k_{v_0})$, $\Gamma$ with its image $\iota(\Gamma)$ and $\mu_0$ with $\mu$. With this, we have
$$\mu_0(\SL_n(\R) / \Gamma) = \mu(G(k_{v_0}) / \Gamma) = [\Gamma: \Lambda]^{-1}\; \mu(G(k_{v_0}) / \Lambda).$$


\section{Prasad's volume formula} \label{volumeformula}

We fix a left-invariant exterior form $\omega_{qs}$ defined over $k$ on the quasi-split inner $k$-form $\cG$ of $G$. As before, $\omega_{qs}$ induces for each $v \in V_\infty$ an invariant form on $\cG(k_v)$, and in turn on any maximal compact subgroup of $\cG(\C)$ through their common Lie algebra. (Note again that such a maximal compact subgroup can be identified with $\SU_n(\R)$.) For each $v \in V_\infty$, we choose $c_v \in k_v$ such that the corresponding maximal compact subgroup has measure 1 for the Haar measure determined in this way by $c_v \omega_{qs}$. 

Let $\varphi: G \rightarrow \cG$ be an isomorphism, defined over some Galois extension $K$ of $k$, such that $\varphi^{-1} \circ {}^\gamma \varphi$ is an inner automorphism of $G$ for all $\gamma$ in the Galois group of $K$ over $k$. Then $\varphi$ induces an invariant form $\omega^* = \varphi^* (\omega_{qs})$ on $G$, defined over $k$. Once again, $\omega^*$ induces for each $v \in V_\infty$ a form on $G(k_v)$ and then a form on any maximal compact subgroup of $G(\C)$ through their Lie algebras. It turns out \cite[§3.5]{Prasad89} that the volume of any such maximal compact subgroup for the Haar measure determined in this way by $c_v \omega^*$ is 1. This implies in particular that the Haar measure determined on $G(k_{v_0})$ by $c_{v_0} \omega^*$ is actually the measure $\mu$ that we constructed earlier. 

For each $v \in V_\infty$, we endow $G(k_v)$ with the Haar measure $\mu_v$ determined by $c_v \omega^*$. As we observed, $\mu_{v_0} = \mu$, and for $v \in V_\infty - \{v_0\}$, $G(k_v)$ is compact, hence $\mu_v(G(k_v)) = 1$ by definition of $\mu_v$. 
The product $G_\infty = \prod_{v \in V_\infty} G(k_v)$ is then endowed with the product measure $\mu_\infty = \prod_{v \in V_\infty} \mu_v$. 
The lattice $\Lambda$ embeds diagonally in $G_\infty$; we will abusively denote its image by $\Lambda$ as well. If $F$ is a fundamental domain for $\Lambda$ in $G(k_{v_0})$, then $F_\infty = F \times \prod_{v \in V_\infty - \{v_0\}} G(k_v)$ is a fundamental domain for $\Lambda$ in $G_\infty$. Therefore
$$\mu_\infty(G_\infty / \Lambda) = \mu_\infty(F_\infty) = \mu_{v_0}(F) \cdot \prod_{v \in V_\infty - \{v_0\}} \mu(G(k_{v})) = \mu(G(k_{v_0}) / \Lambda).$$

Using this observation, the main result from \cite{Prasad89} allows us to compute 
\begin{equation*}
\mu(G(k_{v_0})/ \Lambda) = D_k^{\frac{1}{2}\dim G} (D_l / D_k^{[l:k]})^{\frac{1}{2}\fs(\cG)} \left( \prod_{i=1}^r \frac{i!}{(2\pi)^{i+1}} \right)^{[k:\Q]} \prod_{v \in V_f} e(P_v). \label{volume} \tag{V}
\end{equation*}
Here, $l$ is the splitting field of the quasi-split inner $k$-form $\cG$ of $G$ ($l$ is $k$ or a quadratic extension of $k$), $r=n-1$ is the absolute rank of $G$, $\fs(\cG) = 0$ if $\cG$ is split, otherwise $\fs(\cG) = \frac{1}{2}r(r+3)$ if $r$ is even or $\fs(\cG) = \frac{1}{2}(r-1)(r+2)$ if $r$ is odd, and $e(P_v) = \frac{q_v^{(\dim \overline{M}_v + \dim \overline{\cM}_v)/2}}{\#\overline{M}_v(\ff_v)}$ is the inverse of the volume of $P_v$ for a particular measure. 
We refer to \cite{Prasad89} for the unexplained notation (in the present setting, $S = V_\infty$ consists only of real places). 


\section{An upper bound on the index} \label{indexbound}

For the convenience of the reader, we briefly recollect the upper bound on the index $[\Gamma: \Lambda]$ developed by Borel and Prasad. The complete exposition, proofs and references are to be found in \cite[§2 \& §5]{BorelPrasad89} (in the present setting, $\cS = \{v_0\}$, $G'=G$, $\Gamma' = \Gamma$, etc.). \bigbreak

For each place $v \in V_f$, we fix a maximal $k_v$-split torus $T_v$ of $G$; we also fix an Iwahori subgroup $I_v$ of $G(k_v)$ such that the chamber in the affine building of $G({k}_v)$ fixed by ${I}_v$ is contained in the apartment corresponding to $T_v$. We denote by $\Delta_v$ the basis determined by $I_v$ of the affine root system of $G(k_v)$ relative to $T_v$. 

$\Aut(G(k_v))$, hence also the adjoint group $\overline{G}(k_v)$, acts on $\Delta_v$; we denote by $\xi_v: \overline{G}(k_v) \rightarrow \Aut(\Delta_v)$ the corresponding morphism. Let $\Xi_v$ be the image of $\xi_v$. \bigbreak

Let $C$ be the center of $G$ and $\varphi: G \rightarrow \overline{G}$ the natural central isogeny, so that there is an exact sequence of algebraic groups
$$1 \rightarrow C \rightarrow G \xrightarrow{\varphi} \overline{G} \rightarrow 1.$$
This sequence gives rise to long exact sequences (of pointed sets), which we store in the following commutative diagram ($v \in V$). 
\begin{equation*}
\begin{tikzcd}
1 \arrow{r} & C(k)\arrow{r}\arrow{d} & G(k) \arrow{r}{\varphi}\arrow{d} & \overline{G}(k) \arrow{r}{\delta}\arrow{d} & \rH^1(k,C) \arrow{r}\arrow{d} & \rH^1(k,G) \arrow{d}\\
1 \arrow{r} & C(k_v)\arrow{r} & G(k_v) \arrow{r}{\varphi} & \overline{G}(k_v) \arrow{r}{\delta_v} & \rH^1(k_v,C) \arrow{r} & \rH^1(k_v,G)
\end{tikzcd} \label{long} \tag{$\mathrm{L}_v$}
\end{equation*}
When $v \in V_f$, we have that $\rH^1(k_v, G) = 1$ by a result of Kneser \cite{Kneser65} and thus $\delta_v$ induces an isomorphism 
$$\overline{G}(k_v) / \varphi(G(k_v)) \cong \rH^1(k_v, C).$$
Recall that $\xi_v$ is trivial on $\varphi(G(k_v))$. Thus $\xi_v$ induces a map $\rH^1(k_v, C) \rightarrow \Xi_v$, which we abusively denote by $\xi_v$ as well.

Let $\Delta = \prod_{v \in V_f} \Delta_v$, $\Xi = \bigoplus_{v \in V_f} \Xi_v$ and $\Theta = \prod_{v \in V_f} \Theta_v$, where $\Theta_v \subset \Delta_v$ is the type of the parahoric $P_v$ associated to $\Lambda$. $\Xi$ acts on $\Delta$ componentwise, and we denote by $\Xi_{\Theta_v}$ the stabilizer of $\Theta_v$ in $\Xi_v$ and $\Xi_\Theta$ the stabilizer of $\Theta$ in $\Xi$. 
The morphisms $\xi_v$ induce a map
$$\xi: \rH^1(k,C) \rightarrow \Xi: c \mapsto \xi(c) = (\xi_v(c_v))_{v \in V_f}$$
where $c_v$ denotes the image of $c$ in $\rH^1(k_v, C)$. With this, we define
\begin{align*}
\rH^1(k,C)_\Theta &= \{ c \in \rH^1(k,C) \mid \xi(c) \in \Xi_\Theta \} \\
\rH^1(k,C)'_\Theta &= \{ c \in \rH^1(k,C)_\Theta \mid c_{v_0} = 1 \} \\
\rH^1(k,C)_\xi &=  \{ c \in \rH^1(k,C) \mid \xi(c) = 1 \}. 
\end{align*}

Borel and Prasad \cite[prop.~2.9]{BorelPrasad89} use the exact sequence due to Rohlfs
$$1 \rightarrow C(k_{v_0}) / (C(k) \cap \Lambda) \rightarrow \Gamma / \Lambda \rightarrow \delta(\ol{G}(k)) \cap \rH^1(k,C)'_{\Theta} \rightarrow 1. $$
Since $k_{v_0} = \R$, $C(k_{v_0}) = \{1\}$ or $\{1,-1\}$ depending whether $n$ is odd or even. In particular, it follows that $C(k_{v_0}) = C(k) \cap \Lambda$ and $\Gamma / \Lambda \cong \delta(\ol{G}(k)) \cap \rH^1(k,C)'_{\Theta}$. 
Also, it is clear that the kernel of $\xi$ restricted to $\delta(\ol{G}(k)) \cap \rH^1(k,C)'_\Theta$ is contained in $\delta(\ol{G}(k)) \cap \rH^1(k,C)_\xi$, implying that $\# \left( \delta(\ol{G}(k)) \cap \rH^1(k,C)'_\Theta \right) \leq \# \left( \delta(\ol{G}(k)) \cap \rH^1(k,C)_\xi \right) \cdot \prod_{v \in V_f} \# \Xi_{\Theta_v}$, and in turn,
\begin{equation*}
[\Gamma: \Lambda] \leq \# \left(\delta(\ol{G}(k)) \cap \rH^1(k,C)_\xi \right) \cdot \prod_{v \in V_f} \# \Xi_{\Theta_v} \leq \# \rH^1(k,C)_\xi \cdot \prod_{v \in V_f} \# \Xi_{\Theta_v}. \tag{I} \label{index}
\end{equation*}

In the next two subsections, we try to control the size of $\delta(\ol{G}(k)) \cap \rH^1(k,C)_\xi$. We distinguish the case where $G$ is an inner $k$-form of $\SL_n$ from the case $G$ is an outer $k$-form. For the former, we follow the argument of \cite[prop.~5.1]{BorelPrasad89}. In the latter, we will adapt to our setting a refinement of the bounds of Borel and Prasad due to Mohammadi and Salehi Golsefidy \cite[§4]{MohammadiSalehi12}. Except for minor modifications, all the material in this section can be found in these two sources. 

\subsection{The inner case} \label{indexinner}

Although in the inner case we have already established that $k = \Q$, we will discuss it for an arbitrary (totally real) field $k$, as this will be useful to treat the outer case as well. Let us thus assume $G$ is an inner $k$-form, i.e.~(by the classification) $G$ is isomorphic to $\SL_{n'} \fD$ for some central division algebra $\fD$ over $k$ of index $d = n/n'$. Similarly, over $k_v$, $G$ is isomorphic to $\SL_{n_v} \fD_v$ for some central division algebra $\fD_v$ over $k_v$ of index $d_v = n / n_v$. The center $C$ of $G$ is isomorphic to $\mu_n$, the kernel of the map $\GL_1 \rightarrow \GL_1: x \mapsto x^n$, and thus for any field extension $K$ of $k$, $\rH^1(K, C)$ may (and will in this paragraph) be identified with $K^\times / K^{\times n}$ (where $K^{\times n} = \{ x^n \mid x \in K^\times\}$). With this identification, the canonical map $\rH^1(k,C) \rightarrow \rH^1(k_v, C)$ corresponds to the canonical map $k^\times / k^{\times n} \rightarrow k_v^\times / k_v^{\times n}$. 

The action of $\rH^1(k_v, C)$ on $\Delta_v$ can be described as follows: $\Delta_v$ is a cycle of length $n_v$, on which $\overline{G}(k_v)$ acts by rotations, i.e. $\Xi_v$ can be identified with $\Z / n_v \Z$. The action of $\rH^1(k_v, C)$ is then given by the morphism 
$$k_v^\times / k_v^{\times n} \rightarrow \Z/ n_v \Z : x \mapsto v(x) \mod n_v.$$
From this description, we see that $x \in k_v^\times / k_v^{\times n}$ acts trivially on $\Delta_v$ precisely when $v(x) \in n_v \Z$; in particular, if $G$ splits over $k_v$, $x$ acts trivially if and only if $v(x) \in n \Z$. We can form the exact sequence
$$1 \rightarrow k_n / k^{\times n} \rightarrow \rH^1(k,C)_\xi \xrightarrow{(v)_{v \in V_f}} \bigoplus_{v \in V_f} \Z / n \Z,$$
where $k_n = \{ x \in k^\times \mid v(x) \in n\Z \textrm{ for all } v \in V_f\}$. By the above, the image of $\rH^1(k,C)_\xi$ lies in the subgroup $\bigoplus_{v \in V_f} n_v \Z / n \Z$. 
Let $T$ be the set of places $v \in V_f$ where $G$ does not split over $k_v$, i.e.~for which $n_v \neq n$. Then the exact sequence yields 
$$\# \rH^1(k,C)_\xi \leq \#(k_n / k^{\times n}) \cdot \prod_{v \in T} d_v.$$
The proof of \cite[prop.~0.12]{BorelPrasad89} shows that $\#(k_n / k^{\times n}) \leq h_k \tilde{n} n^{[k:\Q]-1}$, where $\tilde{n} = 1$ or 2 if $n$ is respectively odd or even. In the case $k = \Q$, which will be of interest later, it is indeed clear that $\#(\Q_n / \Q^{\times n}) = \tilde{n}$. 

\subsection{The outer case} \label{indexouter}

Second, we assume $G$ is an outer $k$-form. The centers of $G$ and of the quasi-split inner form $\cG$ of $G$ are $k$-isomorphic, hence there is an exact sequence
\begin{equation}
1 \rightarrow C \rightarrow \rR_{l/k}(\mu_n) \xrightarrow{N} \mu_n \rightarrow 1, \label{outercenter1} 
\end{equation}
where $\mu_n$ denotes the kernel of the map $\GL_1 \rightarrow \GL_1: x \mapsto x^n$ as above, $\rR_{l/k}$ denotes the restriction of scalars from $l$ to $k$, and $N$ is (induced by) the norm map of $l/k$. The long exact sequence associated to it yields
\begin{equation}
1 \to \mu_n(k) / N(\mu_n(l)) \to \rH^1(k,C) \to l_0 / l^{\times n} \to 1 \label{outercenter2} 
\end{equation}
where $l_0 / l^{\times n}$ denotes the kernel of the norm map $N: l^\times / l^{\times n} \to k^\times / k^{\times n}$. The Hasse principle for simply connected groups allows us to write
\begin{equation}
\begin{tikzcd}
\overline{G}(k) \arrow{r}{\delta}\arrow{d} & \rH^1(k,C) \arrow{r}\arrow{d} & \rH^1(k,G) \arrow{d}{\rotatebox{90}{\(\sim\)}}\\
\prod_{v \in V_\infty} \overline{G}(k_v) \arrow{r}{(\delta_v)_{v}} & \prod_{v \in V_\infty} \rH^1(k_v,C) \arrow{r} & \prod_{v \in V_\infty} \rH^1(k_v,G). \label{hasseprinciple}
\end{tikzcd}
\end{equation}

If $n$ is odd, we can make the following simplifications: $\mu_n(k) = \{1\}$ and thus $\rH^1(k,C) \cong l_0 / l^{\times n}$ in \eqref{outercenter2}; using the analogous sequence for $k_v$, we also have $\rH^1(k_v,C) \cong \{1\}$ for $v \in V_\infty$. Thus, in \eqref{hasseprinciple}, we read that $\delta$ is surjective and conclude $\delta(\overline{G}(k)) \cong l_0 / l^{\times n}$. 

If $n$ is even, a weaker conclusion holds provided $l$ has at least one complex place, i.e.~if $V_\infty \neq \{v_0\}$. Indeed, if $v_1 \in V_\infty - \{v_0\}$, so that $l \ot_k k_{v_1} = \C$, then $(l \ot_k k_{v_1})^\times / (l \ot_k k_{v_1})^{\times n} = \{1\}$ and the long exact sequences associated to \eqref{outercenter1} read
\begin{equation}
\begin{tikzcd}
1 \arrow{r} & \{\pm 1\} \arrow{r}\arrow{d}{\rotatebox{90}{\(\sim\)}} & \rH^1(k,C) \arrow{r}\arrow{d} & l_0 / l^{\times n} \arrow{r}\arrow{d} & 1 \\
1 \arrow{r} & \{\pm 1\} \arrow{r}{\sim} & \rH^1(k_{v_1},C) \arrow{r} & 1 \arrow{r} &1. \label{split}
\end{tikzcd}
\end{equation}
The first row splits, and thus we may identify $\rH^1(k,C) \cong \{\pm 1\} \oplus l_0 / l^{\times n}$; then $l_0 / l^{\times n}$ is precisely the kernel of the canonical map $\rH^1(k,C) \to \rH^1(k_{v_1},C)$. Now since the adjoint map $G(k_{v_1}) \to \overline{G}(k_{v_1})$ is surjective (recall that $G(k_{v_1}) \cong \SU_n(\R)$), we have in $(\mathrm{L}_{v_1})$ that the image of $\overline{G}(k)$ in $\rH^1(k_{v_1},C)$ is trivial, hence $\delta(\overline{G}(k)) \subset l_0 / l^{\times n}$. 

If $n$ is even and $V_\infty = \{v_0\}$, then $k = \Q$. We have, for each $v \in V_f$,
\begin{equation}
\begin{tikzcd}
1 \arrow{r} & \frac{\mu(k)}{N(\mu(l))} \arrow{r}\arrow{d} & \rH^1(k,C) \arrow{r}\arrow{d} & l_0 / l^{\times n} \arrow{r}\arrow{d} & 1 \\
1 \arrow{r} & \frac{\mu(k_v)}{N(\mu_n(l \ot k_v))} \arrow{r} & \rH^1(k_{v},C) \arrow{r} & \frac{\ker(N: l \ot k_v \to k_v)}{(l \ot k_v)^{\times n}} \arrow{r} &1. \notag
\end{tikzcd}
\end{equation}
We observe that $\mu(k) / N(\mu(l))$ $(\cong \{\pm 1\})$ acts trivially on $\Delta_v$ for every $v \in V_f$ (see for example \cite[§4]{MohammadiSalehi12}), hence the action factors through $l_0 / l^{\times n}$. Thus $\# H^1(k,C)_\xi = 2 \cdot \# l_\xi / l^{\times n}$, where $l_\xi/ l^{\times n} = \{ x \in l_0/ l^{\times n} \mid \xi(x) = 1\}$, so that the bound we establish below will hold with an extra factor $\tilde{n}$ in the case $k = \Q$. \bigbreak

It remains to understand the action of $l_0 / l^{\times n}$ on $\Delta$. Let $x \in l$ and let
$$(x) = \prod_{\fP} \fP^{i_\fP} \overline{\fP}{}^{i_{\overline{\fP}}} \cdot \prod_{\fp'} \fp'^{i_{\fp'}} \cdot \prod_{\fP''} \fP''^{i_{\fP''}}$$
be the unique factorization of the fractional ideal of $l$ generated by $x$, where $(\fP, \overline{\fP})$ (resp.~$\fp'$, $\fP''$) runs over the set of primes of $l$ that lie over primes of $k$ that split over $l$ (resp.~over inert primes of $k$, over ramified primes of $k$). When $x \in l_0$, $N(x) \in k^{\times n}$ and thus $n$ divides $i_\fP + i_{\overline{\fP}}$, $2i_{\fp'}$ and $i_{\fP''}$. 

Observe that $v \in V_f$ splits over $l$ if and only if $l$ embeds into $k_v$, that is, if and only if ($\cG$ splits over $k_v$ and) $G$ is an inner $k_v$-form of $\SL_n$. In particular, at such a place $v$, $G$ is isomorphic to $\SL_{n_v} \fD_v$ for some central division algebra $\fD_v$ over $k_v$ of index $d_v = n / n_v$. 
In \cite[§4]{MohammadiSalehi12}, it is shown that when $v$ splits as $\fP \overline{\fP}$ over $l$, the action of $x \in l_0$ is analogous to the inner case described in \ref{indexinner}, hence $x$ acts trivially on $\Delta_v$ if and only if $n$ divides $d_v i_\fP$ (and thus $n$ also divides $d_v i_{\overline{\fP}}$), i.e. $v_{\fP} (x) = 0 \mod n_v$ (and $v_{\overline{\fP}}(x) = 0 \mod n_v$). 
When $v$ is inert, say $v$ corresponds to $\fp'$, then $x$ acts trivially on $\Delta_v$ if and only if $n$ divides $i_{\fp'}$ \cite[§4]{MohammadiSalehi12}. 

Let $T$ be the set of places $v \in V_f$ such that $v$ splits over $l$ and $G$ is not split over $k_v$, and let $T^l$ be a subset of the finite places of $l$ consisting of precisely one extension of each $v \in T$, so that restriction to $k$ defines a bijection from $T^l$ to $T$. By the discussion above, we can form an exact sequence
$$1 \to (l_n \cap l_0) / l^{\times n} \to l_\xi / l^{\times n} \xrightarrow{(w)_{w \in T^l}} \bigoplus_{w \in T^{l}} \Z / n\Z,$$
where $l_n = \{ x \in l^\times \mid w(x) \in n\Z \textrm{ for each normalized finite place $w$ of $l$}\}$ and $l_\xi / l^{\times n} = \{ x \in l_0/ l^{\times n} \mid \xi(x) = 1\}$. Moreover, the image of $l_\xi / l^{\times n}$ lies in the subgroup $\bigoplus_{w \in T^l} n_v \Z / n \Z$. Thus, if we assume $k \neq \Q$ (so that we may identify $\delta(\ol{G}(k))$ with a subgroup of $l_0 / l^{\times n}$), 
$$\# \left( \delta(\ol{G}(k)) \cap \rH^1(k,C)_\xi \right) \leq \# \left(l_\xi / l^{\times n} \right) \leq \# \left( (l_n \cap l_0) / l^{\times n} \right) \cdot \prod_{v \in T} d_v.$$
We get the concrete bound on the index 
$$[\Gamma: \Lambda] \leq h_l \tilde{n}^{m} n \cdot \prod_{v \in T} d_v \cdot \prod_{v \in V_f} \# \Xi_{\Theta_v}$$
by combining this with \eqref{index} and lemma \ref{dirichletunits}. If $k = \Q$, we have instead
$$[\Gamma: \Lambda] \leq h_l \tilde{n}^{m+1} n \cdot \prod_{v \in T} d_v \cdot \prod_{v \in V_f} \# \Xi_{\Theta_v}. $$


\section{The field $k$ is $\Q$} \label{kisQ}

We set $m= [k:\Q]$ and as before, $n=r+1$. The purpose of this section is to show that $k = \Q$, i.e.~$m = 1$. \bigbreak

We start by recalling that if $P_v$ is special (in particular, if it is hyperspecial), i.e.~$\Theta_v$ consists of a single special (resp.~hyperspecial) vertex of $\Delta_v$, then $\Xi_{\Theta_v}$ is trivial. Regardless of the type $\Theta_v$, we have $\# \Xi_{\Theta_v} \leq \tilde{n}$ unless $G$ is an inner $k_v$-form of $\SL_n$ (say $G \cong SL_{n_v} (\fD_v)$), in which case $\# \Xi_{\Theta_v} \leq \# \Delta_v = n_v$, where $n_v-1$ is the rank of $G$ over $k_v$. (For example, this can be seen explicitly on all the possible relative local Dynkin diagrams $\Delta_v$ for $G(k_v)$, enumerated in \cite[§4]{Tits79} or \cite[§2]{MohammadiSalehi12}. In the inner case, the Dynkin diagram is a cycle on which the adjoint group acts as rotations.) \bigbreak

By a result of Kneser \cite{Kneser65}, $G$ is quasi-split over the maximal unramified extension $\widehat{k}_v$ of $k_v$ for any $v \in V_f$. This means that over $\widehat{k}_v$, $G$ is isomorphic to $\cG$. The quasi-split $k$-forms of simply connected absolutely almost simple groups of type $A_{n-1}$ are well understood \cite{Tits66}: either $\cG \cong \SL_n$, or $\cG \cong \SU_{n,l}$, the special unitary group associated to the split hermitian form on $l^n$, where $l$ is a quadratic extension of $k$ equipped with the canonical involution (incidentally, $l$ is the splitting field of $\SU_{n,l}$, in accordance with the notation introduced). Thus, over $\widehat{k}_v$, only these two possibilities arise for $G$. (Nonetheless, $\cG$ might split over $\widehat{k}_v$; in fact, it does so except at finitely many places.) In particular, the rank $r_v$ of $G$ over $\widehat{k}_v$ is either $r$, or the ceiling of $r/2$. 


\subsection{The inner case} \label{inner}

The case where $G$ is an inner $k$-form of $\SL_n$ (i.e.~when $l=k$) has been treated in section \ref{setup}. We observed that if $G$ is an inner $k_v$-form of $\SL_n$ for some $v \in V_\infty$, then $G(k_v)$ cannot be compact. This forced $V_\infty = \{v_0\}$ and thus $k =  \Q$. 


\subsection{The outer case} \label{outer}

Here we settle the case where $G$ is an outer $k$-form of $\SL_n$, i.e.~when $[l:k] = 2$. We observed in section \ref{setup} that $l$ has two real embeddings (extending $k \to k_{v_0}$) and $m-1$ pairs of conjugate complex embeddings. Suppose that $m > 1$. \bigbreak

Let $T$ be the finite set of places $v \in V_f$ such that $v$ splits over $l$ and $G$ is not split over $k_v$. Then, according to section \ref{indexouter}, we have
$$[\Gamma: \Lambda] \leq h_l \tilde{n}^{m} n \cdot \prod_{v \in T} d_v \cdot \prod_{v \in V_f} \# \Xi_{\Theta_v}$$
where $\tilde{n} = 1$ or 2 if $n$ is odd or even, and $h_l$ denotes the class number of $l$. Combined with \eqref{volume}, we find (abbreviating $V_n = \prod_{i=1}^{n-1} \frac{i!}{(2\pi)^{i+1}}$)
$$\mu(G(k_{v_0})/ \Gamma) \geq \tilde{n}^{-m} n^{-1} h_l^{-1} D_k^{\frac{n^2 -1}{2}} (D_l / D_k^{2})^{\frac{1}{2}\fs(\cG)} V_n^{m} \cdot \prod_{v \in T} d_v^{-1} \cdot \prod_{v \in V_f} \# \Xi_{\Theta_v}^{-1} \cdot \prod_{v \in V_f} e(P_v).$$

We use \cite[prop.~2.10, rem.~2.11]{Prasad89} and the observations made at the begining of section \ref{kisQ} to study the local factors of the right-hand side. 
\begin{enumerate}[label=(\roman*),itemsep=0pt, topsep=4pt]
\item If $v \in T$, then we use $e(P_v) \geq (q_v-1) q_v^{(n^2 -n^2 d_v^{-1} - 2)/2}$ to obtain $d_v^{-1} \cdot \# \Xi_{\Theta_v}^{-1} \cdot e(P_v) \geq n^{-1} \cdot (q_v-1) q_v^{n^2 /4 -1} > 1$ when $n \geq 4$. When $n = 3$, then $d_v = 3$ and we also have $d_v^{-1} \cdot \# \Xi_{\Theta_v}^{-1} \cdot e(P_v) \geq n^{-1} \cdot (q_v-1) q_v^{n^2 /3 -1} > 1$ (lemma \ref{ePv}).

\item If $v \notin T$ but $P_v$ is special, then $\# \Xi_{\Theta_v} = 1$ and $e(P_v) > 1$, thus $\# \Xi_{\Theta_v}^{-1} \cdot e(P_v) > 1$.

\item If $v \notin T$, $P_v$ is not special and $G$ is not split over $k_v$, then we use that $e(P_v) \geq (q_v+1)^{-1} q_v^{r_v+1}$ to obtain $\# \Xi_{\Theta_v}^{-1} \cdot e(P_v) \geq \tilde{n}^{-1} \cdot (q_v+1)^{-1} q_v^{\ceil{(n-1)/2}+1} > 1$ (lemma \ref{ePv2}). 

\item If $v \notin T$, $P_v$ is not special but $G$ splits over $k_v$, then $P_v$ is properly contained in a hyperspecial parahoric $H_v$. There is a canonical surjection $H_v \to \SL_n(\ff_v)$, under which the image of $P_v$ is the proper parabolic subgroup $\overline{P}_v$ of $\SL_n(\ff_v)$ whose type consists of the vertices belonging to the type of $P_v$ in the Dynkin diagram obtained by removing the vertex corresponding to $H_v$ in the affine Dynkin diagram of $G(k_v)$. In particular, it follows that $[H_v: P_v] = [\SL_n(\ff_v) : \overline{P}_v]$ and we may compute using lemma \ref{indexPv}
$$e(P_v) = [H_v: P_v] \cdot e(H_v) > [H_v: P_v] > q^{n-1}.$$
Hence $\# \Xi_{\Theta_v}^{-1} \cdot e(P_v) >  n^{-1} q^{n-1} > 1$.
\end{enumerate}
Multiplying all the factors together, we have that 
$$\prod_{v \in T} d_v^{-1} \cdot \prod_{v \in V_f} \# \Xi_{\Theta_v}^{-1} \cdot \prod_{v \in V_f} e(P_v) > 1$$
and we can thus write 
\begin{equation}
\mu(G(k_{v_0})/ \Gamma) > \tilde{n}^{-m} n^{-1} h_l^{-1} D_k^{\frac{n^2 -1}{2}} (D_l / D_k^{2})^{\frac{1}{2}\fs(\cG)} V_n^{m}. \label{Bout}
\end{equation}

Recall that $D_l / D_k^{2}$ is the norm of the relative discriminant $\mathfrak{d}_{l/k}$ of $l$ over $k$; in particular, $D_l / D_k^{2}$ is a positive integer. Note also that $\fs(\cG) \geq 5$ if $n \geq 3$. 
We combine this with two number-theoretical bounds: from the results in \cite[§6]{BorelPrasad89}, we use that 
$$h_l^{-1}D_l \geq \frac{1}{100} \left( \frac{12}{\pi} \right)^{2m};$$ 
from Minkowski's geometry of numbers, we recall ($k$ is totally real)
$$D_k^{\frac{1}{2}} \geq \frac{m^m}{m!}.$$
Altogether, we obtain
\begin{align}
\mu(G(k_{v_0})/ \Gamma) &> \frac{1}{100 \tilde{n}^{m}} \left( \frac{12}{\pi} \right)^{2m} D_k^{\frac{n^2 -5}{2}} (D_l / D_k^{2})^{\frac{1}{2}\fs(\cG) -1} V_n^{m} n^{-1} \label{Bout2} \\
 					&\geq \frac{1}{100\tilde{n}^{m}} \left( \frac{12}{\pi} \right)^{2m} \left( \frac{m^m}{m!} \right)^{n^2 -5} V_n^{m} n^{-1}. \notag
\end{align}
We consider the function $M: \N \times \N \rightarrow \R$ defined by
$$M(m,n) = \frac{1}{100 \tilde{n}^{m}} \left( \frac{12}{\pi} \right)^{2m} \left( \frac{m^m}{m!} \right)^{n^2 -5} \left( \prod_{i=1}^{n-1} \frac{i!}{(2\pi)^{i+1}} \right)^{m-1} n^{-1}.$$
$M$ is strictly increasing in both variables, provided $m \geq 2$ and $n \geq 6$ (lemma \ref{Mout}). In consequence, if $m \geq 2$, $n \geq 9$, 
$$\frac{\mu(G(k_{v_0})/ \Gamma)}{\mu(\SL_n(\R)/ \SL_n(\Z))} > \frac{M(m,n)}{\prod_{i=2}^{n} \zeta(i)} > \frac{M(2,9)}{\prod_{i=2}^{\infty} \zeta(i)} >1,$$
cf.~lemma \ref{prodzeta}, and $\Gamma$ is not of minimal covolume. \bigbreak

In a similar manner, we would like to show that $m$ cannot be large. To this end, Odlyzko's bounds on discriminants \cite[table 4]{Odlyzko76} are well-suited. We have 
$$D_k^{\frac{1}{2}} > A^m \cdot E, \textrm{ with $A = 29.534^{\frac{1}{2}}$ and $E = e^{-4.13335}$.}$$
Combining with \eqref{Bout2}, we obtain
\begin{align}
\mu(G(k_{v_0})/ \Gamma) &> \frac{1}{100\tilde{n}^{m}} \left( \frac{12}{\pi} \right)^{2m} \left( A^m E \right)^{n^2 -5} V_n^{m} n^{-1}. \notag
\end{align}
We consider the function $M': \N \times \N \rightarrow \R$ defined by 
$$M'(m,n) = \frac{1}{100 \tilde{n}^{m}} \left( \frac{12}{\pi} \right)^{2m} \left( A^m E \right)^{n^2 -5} \left( \prod_{i=1}^{n-1} \frac{i!}{(2\pi)^{i+1}} \right)^{m-1} n^{-1}.$$
$M'$ is also strictly increasing in both variables, provided $m \geq 4$ and $n \geq 4$ (lemma \ref{Mout2}). This means that if $m \geq 6$, $n \geq 4$, 
$$\frac{\mu(G(k_{v_0})/ \Gamma)}{\mu(\SL_n(\R)/ \SL_n(\Z))} > \frac{M'(m,n)}{\prod_{i=2}^{n} \zeta(i)} > \frac{M'(6,4)}{\prod_{i=2}^{\infty} \zeta(i)} >1,$$
(cf.~table \ref{tableMout2} and lemma \ref{prodzeta}) and $\Gamma$ is not of minimal covolume. \bigbreak

We may thus restrict our attention to the range $4 \leq n \leq 8$ and $2 \leq m \leq 5$ (we will treat the case $n=3$ with a separate argument at the end of this section). By further sharpening our estimates on the discriminant, we will show that all these values are excluded as well, forcing $m=1$. 

From the bound \eqref{Bout2} and the estimate $\mu(G(k_{v_0})/ \Gamma) \leq \mu(\SL_n(\R)/ \SL_n(\Z)) < 2.3 \cdot V_n$ (\ref{prodzeta}), we deduce an upper bound on the discriminant of $k$:
\begin{align}
D_k &< \left( 230 \tilde{n}^{m} \left( \frac{\pi}{12} \right)^{2m} (D_l / D_k^{2})^{1 - \frac{1}{2}\fs(\cG)} V_n^{1-m} n \right)^{\frac{2}{n^2-5}} \label{Bout3} \\
& \leq \left( 230 \tilde{n}^{m} \left( \frac{\pi}{12} \right)^{2m} V_n^{1-m} n \right)^{\frac{2}{n^2-5}} =: C(m,n). \notag
\end{align}
As can be seen by comparing the values of $C$ (table \ref{tableCout}) with the smallest discriminants (table \ref{tabledisck}), this bound already rules out $n \geq 7$. We use these two tables to obtain information about $D_k$. A lower bound on $D_k$ in turn will give us a bound on the relative discriminant: using \eqref{Bout2} again, 
\begin{equation}
D_l / D_k^2 < \left( 230 \tilde{n}^{m} \left( \frac{\pi}{12} \right)^{2m} D_k^{\frac{5-n^2}{2}} V_n^{1-m} n \right)^{\frac{2}{\fs(\cG) -2}}. \label{Brel}
\end{equation}
We proceed to rule out all values of $m$. In what follows, unless specified otherwise, any bound on $D_k$ is obtained using \eqref{Bout3}, (\ref{tableCout}) or (\ref{tabledisck}), and any upper bound on $D_l / D_k^2$ using \eqref{Brel}. Claims made on the existence of a field $l$ satisfying certain conditions are always made with the underlying assumption that $l$ is a quadratic extension of $k$ of signature $(2,m-1)$.  

\begin{enumerate}[label=(\roman*),itemsep=0pt]

\item[$\underline{m=5}$] gives $14641 \leq D_k \leq 15627$ (and $n=4$). A quick look in the online database of number fields \cite{JonesRoberts14} shows%
\footnote{The database \cite{JonesRoberts14} provides a certificate of completeness for certain queries. All allusions made here refer to searches that are proven complete. 
However, it is important to note that in \cite{JonesRoberts14}, class numbers are computed assuming the generalized Riemann hypothesis (the rest of the data being unconditional). The class numbers referred to in this paper were therefore all verified using PARI/GP's \texttt{bnfcertify} command. A PARI/GP script of this process is available on the author's page (\href{http://www.math.ucsd.edu/~fthilman/research/mincovsln/classnumberscertificate.html}{\texttt{math.ucsd.edu/{\textasciitilde}fthilman/}}).}
 that there is only one such field (with $D_k = 14641$). Now for $l$, Odlyzko's bound \cite[table 4]{Odlyzko76} reads 
$$D_l > (29.534)^2 \cdot (14.616)^8 \cdot e^{-8.2667} \geq 4.66756 \cdot 10^8$$
and in particular, we compute that $D_l/ D_k^2 \geq 2.177$ (hence $D_l / D_k^2 \geq 3$). On the other hand, \eqref{Brel} yields 
$$D_l / D_k^2 < 1.271,$$
ruling out this case. 

\item[$\underline{m=4}$] gives $725 \leq D_k \leq 1741$ (and $n = 4$). A quick look in the database \cite{JonesRoberts14} shows that there are three fields satisfying this requirement, with discriminants respectively 725, 1125, 1600.
\begin{enumerate}[label=(\roman*),itemsep=0pt]
\item If $D_k = 1600$, then $D_l / D_k^2 < 1.365$, hence $D_l = D_k^2 = 2560000$. But, as observed in the database, there are no fields $l$ of signature $(2,3)$ with $D_l \leq 3950000$. 
\end{enumerate}
Unfortunately, the database has no complete records for fields with signature $(2,3)$ and discriminants past 3950000. We will thus need to refine our bounds to be able to treat the two other possible values for $D_k$. First, we go back to our bound on the class number $h_l$: as in \cite[§6]{BorelPrasad89}, we use Zimmert's bound $R_l \geq 0.04 \cdot e^{2 \cdot 0.46 + (m-1) \cdot 0.1}$ on the regulator of $l$ along with the Brauer-Siegel theorem (with $s=2$) to deduce
$$h_l \leq 100 \cdot e^{- 0.82 - 0.1 \cdot m} \cdot (2\pi)^{-2m} \cdot \zeta(2)^{2m} \cdot  D_l  \leq 29.523 \cdot \left( \frac{\pi}{12} \right)^8 \cdot D_l.$$
Using this, we may rewrite the bound \eqref{Brel} as 
\begin{equation*}
D_l / D_k^2 < \left( 67.9029 \tilde{n}^{4} \left( \frac{\pi}{12} \right)^{8} D_k^{\frac{5-n^2}{2}} V_n^{-3} n \right)^{\frac{2}{\fs(\cG) -2}}. 
\end{equation*}
\begin{enumerate}[label=(\roman*),itemsep=0pt, start=2]
\item If $D_k = 1125$, then our new bound yields $D_l / D_k^2 \leq 2$, hence $D_l \leq 2 D_k^2 = 2531250$ and this is ruled out by the database. 
\item If $D_k = 725$, then our new bound yields $D_l / D_k^2 \leq 11$, hence $D_l \leq 11 D_k^2 = 5781875$. Selmane \cite{Selmane99} has computed all fields of signature $(2,3)$ that possess a proper subfield and have discriminant $D_l \leq 6688609$. It turns out that among those, only the field with discriminant $-5781875$ can be an extension of $k$. As observed in the online database, this field has class number 1. Substituting this information in \eqref{Bout}, we see that the right-hand side exceeds $2.3 \cdot V_n$. 
\end{enumerate}

\item[$\underline{m=3}$] gives $49 \leq D_k \leq 194$ (and $n = 4$ or $5$). A quick look in the database \cite{JonesRoberts14} shows that there are four fields satisfying this requirement, with discriminants respectively 49, 81, 148, 169. 
\begin{enumerate}[label=(\roman*),itemsep=0pt]
\item If $D_k = 169$, then $D_l / D_k^2 < 1.661$ hence $D_l = D_k^2 = 28561$. There are no fields $l$ with $D_l \leq 28000$. 
\item If $D_k = 148$, then $D_l / D_k^2 \leq 2$. There are no fields $l$ with $D_l / 148^2 = 1$ or 2. 
\item If $D_k = 81$, then $D_l / D_k^2 \leq 24$. An extensive search in the database shows that this can only be satisfied by one field $l$, with discriminant $D_l = 81^2 \cdot 17$. It has class number $h_l = 1$, hence we may substitute this information in \eqref{Bout} and compute that the right-hand side exceeds $2.3 \cdot V_n$. 
\item If $D_k = 49$, then $D_l / D_k^2 \leq 155$. An extensive search in the database shows that there are 6 fields $l$ satisfying this condition. They correspond to $D_l / D_k^2 = 13, 29, 41, 64, 97$ or $113$, and all have class number 1. Then, in \eqref{Bout}, the right-hand side again exceeds $2.3 \cdot V_n$ (note that it suffices to check this for the smallest value of $D_l/D_k^2$). 
\end{enumerate}

\item[$\underline{m=2}$] gives $5 \leq D_k \leq 21$ (and $4 \leq n \leq 6$). It is well known (and can be observed in the database \cite{JonesRoberts14}) that there are 6 fields satisfying this requirement, with discriminants respectively 5, 8, 12, 13, 17, 21. From \eqref{Brel}, we see that $D_l/ D_k^2 \leq$ 214, 38, 8, 6, 2, 1 respectively. 
\begin{enumerate}[label=(\roman*),itemsep=0pt]
\item If $D_k = 21$ or 17, we observe that $D_l \leq 578$. There are no fields with $D_l \leq 578$ that can be extensions of $k$ in these cases. 
\item If $D_k = 13$, then the database exhibits only one possible field $l$ with $D_l = 13^2 \cdot 3$. This field has trivial class group, and using this information in \eqref{Bout}, we see that the right-hand side exceeds $2.3 \cdot V_n$. 
\item If $D_k = 12$, then there are again no fields with $D_l \leq 8 D_k^2$. 
\item If $D_k = 8$, then there are 11 candidates $l$ with $D_l \leq 38 \cdot 8^2$, and all have trivial class group. The one with smallest relative discriminant has $D_l / D_k^2 = 7$. For this field (hence for all of them), the right-hand side of \eqref{Bout} is again too large. 
\item If $D_k = 5$, there are 25 candidates $l$ with $D_l \leq 214 \cdot 5^2$, and all have trivial class group. The one with smallest relative discriminant has $D_l = 11$. This field (hence all of them) is one more time excluded by \eqref{Bout}. 
\end{enumerate}
\end{enumerate}
\bigbreak \smallskip

It remains to deal with the case {$n=3$}. First, we proceed as above, using lemma \ref{Mout2}, $M'(16,3) \simeq 4.6751...$, and $\zeta(2) \cdot \zeta(3) < 1.97731$ to see that 
$$\frac{\mu(G(k_{v_0})/ \Gamma)}{\mu(\SL_3(\R)/ \SL_3(\Z))} > \frac{M'(m,3)}{\zeta(2) \cdot \zeta(3)} >1$$
provided $m \geq 16$. Hence we may restrict our attention to the range $2 \leq m \leq 15$. 

Unfortunately, this bound on the degree of $k$ is too large to allow us to work with a number field database. Of course, the reason this bound is large is that the powers of $D_k$ and $D_l$ appearing in \eqref{Bout} are very small. In turn, the bound we used for the class number $h_l$ was very greedy in terms of $D_l$, aggravating the situation. In fact,  we can use \eqref{Bout} and one of Odlyzko's bounds \cite{Odlyzko76} for $D_l$ to obtain a lower bound on $h_l$:
\begin{equation}
h_l \geq \frac{D_k^{-1} D_l^{\frac{5}{2}} V_3^{m-1}}{3 \cdot \zeta(2) \cdot \zeta(3)} \geq \frac{D_l^2 V_3^{m-1}}{3 \cdot \zeta(2) \cdot \zeta(3)} > \frac{(25.465^2 \cdot 13.316^{2m-2} \cdot e^{-7.0667})^2 \cdot V_3^{m-1}}{3 \cdot \zeta(2) \cdot \zeta(3)}. \label{Hout}
\end{equation}
We record the values of this bound in table \ref{tablehl} (for small values of $m$, we used the actual minimum for $D_l$ to obtain this lower bound for $h_l$).

To solve this issue, we use the following trick. The Hilbert class field $L$ of $l$ has degree $[L: \Q] = 2 m h_l$, signature $(2h_l, (m-1) h_l)$ and discriminant $D_L = D_l^{h_l}$. Hence, when the class number is large, we can use Odlyzko's bounds \cite{Odlyzko76} for $D_L$ in order to improve our bounds on $D_l$. Namely, we have
$$D_l = D_L^{\frac{1}{h_l}} > 60.015^{2} \cdot 22.210^{2m-2} \cdot e^{\frac{-80.001}{h_l}}.$$
We record this bound for $D_l$ in table \ref{tableDl}. 

Now using $D_l \geq D_k^2$, we may rewrite \eqref{Bout2} as
$$\zeta(2) \cdot \zeta(3) \cdot V_3 > \mu(G(k_{v_0})/ \Gamma) > \frac{1}{300} \left( \frac{12}{\pi} \right)^{2m} D_l \cdot V_3^{m}$$
and check that this inequality contradicts the bound in table \ref{tableDl} as soon as $m \geq 4$. For $m=3$ and $m=2$, the bound reads respectively $D_l \leq 4578732$ and $D_l \leq 13643$. 

Finally, to treat the remaining two cases, we can use the online database \cite{JonesRoberts14}. If $m=3$, we observe that all fields of signature $(2,2)$ with discriminant $D_l \leq 4578732$ have class number either $h_l = 1$ or $h_l = 2$; this contradicts \eqref{Hout} and table \ref{tablehl}. Similarly, if $m=2$, we observe in the database that all fields of signature $(2,1)$ with discriminant $D_l \leq 13643$ also have class number either $h_l = 1$ or $h_l = 2$. This is again a contradiction to \eqref{Hout} and table \ref{tablehl}. \bigbreak

\begin{rem*}
Below is a summary of the various discriminant bounds that were used in this section to exclude a given couple $(m,n)$ from giving rise to a lattice of minimal covolume. 
\begin{center} \begin{tikzpicture}[scale=0.8]
    \coordinate (Origin)   at (0,0);
    \coordinate (XAxisMin) at (-1,0);
    \coordinate (XAxisMax) at (17,0);
    \coordinate (YAxisMin) at (0,-8);
    \coordinate (YAxisMax) at (0,1);
    \draw [thick,black] (XAxisMin) -- (XAxisMax);
    \draw [thick,black] (YAxisMin) -- (YAxisMax);
    \draw [thick,black] (-1,1) -- (0,0);
    \node at (0,1) [below left,inner sep=3pt] {$m$};
    \node at (-1,0) [above right,inner sep=3pt] {$n$};
    
    \node at (-0.5,-0.5) {3};
    \node at (-0.5,-1.5) {4};
    \node at (-0.5,-2.5) {5};
    \node at (-0.5,-7.5) {10};

    \node at (0.5,0.5) {1};
    \node at (1.5,0.5) {2};
    \node at (2.5,0.5) {3};
    \node at (3.5,0.5) {4};
    \node at (4.5,0.5) {5};
    \node at (9.5,0.5) {10};
    \node at (14.5,0.5) {15};

\begin{scope}
\draw[pattern = north east lines]
(1.1,-5.9) -- (1.1,-3.8) -- (1.9,-3.8) -- (1.9,-2.8) -- (2.9,-2.8) --(2.9,-2.1) -- (3.9,-2.1) -- (3.9,-5.9) -- cycle;
\fill[white] (1.4,-5.6) -- (1.4,-4.1) -- (2.2,-4.1) -- (2.2,-3.1) -- (3.2,-3.1) --(3.2,-2.4) -- (3.6,-2.4) -- (3.6,-5.6) -- cycle;
\end{scope}
\node[rotate=90] at (1.5,-5) {\ref{tabledisck}};

    \foreach \x in {1,2,...,17}{
      \foreach \y in {-1,-2,...,-8}{ 
        \node[draw,circle,inner sep=1pt,fill] at (\x-0.5,\y+0.5) {};
       }}
    
\begin{scope}
\clip (1,-8) -- (1,-6) -- (2,-6) -- (2,-4) -- (5,-4) -- (5,-3) -- (11,-3) -- (11,-2) -- (17,-2) -- (17,-2.4) -- (11.4,-2.4) -- (11.4, -3.4) -- (5.4,-3.4) -- (5.4,-4.4) -- (2.4, -4.4) -- (2.4,-6.4) -- (1.4, -6.4) -- (1.4, -8) -- cycle;
\draw[pattern=north west lines]
	(1.1,-10) -- (1.1,-6.1) -- (2.1,-6.1) -- (2.1,-4.1) -- (5.1,-4.1) -- (5.1,-3.1) -- (11.1,-3.1) -- (11.1,-2.1) -- (17.1,-2.1)--(17.1,-3.1);
\end{scope}
\node[rotate=90] at (2,-7.5) {Minkowski};

\begin{scope}
\clip 	(3,-8) -- (3,-3) -- (4,-3) -- (4,-2) -- (5,-2) -- (5,-1) -- (14,-1) -- (14,0) -- (17,0) -- (17,-0.4) -- (15.4, -0.4) -- (15.4,-1.4) -- (5.4,-1.4) -- (5.4,-2.4) -- (4.4,-2.4) -- (4.4,-3.4) -- (3.4,-3.4) -- (3.4,-8) -- cycle; 
\draw[pattern = north west lines]
	(3.1,-10) -- (3.1,-3.1) -- (4.1,-3.1) -- (4.1,-2.1) -- (5.1,-2.1) -- (5.1,-1.1) -- (15.1,-1.1) -- (15.1,-0.1) -- (17.1,-0.1) -- (17.1,-2.1); 
\end{scope}
\node at (8.5,-2) {Odlyzko};

\draw
(0.1,-8) -- (0.1,-0.1) -- (0.9,-0.1) -- (0.9,-8);
\node[rotate=90] at (0.5,-4.95) {(sections \ref{Ginner} and \ref{hyperspecial})};

\draw[pattern = north west lines]
(1.5,-3.5) circle (0.2)
(1.5,-2.5) circle (0.2)
(1.5,-1.5) circle (0.2)
(1.5,-0.5) circle (0.2)
(2.5,-2.5) circle (0.2)
(2.5,-1.5) circle (0.2)
(2.5,-0.5) circle (0.2)
(3.5,-1.5) circle (0.2)
(4.5,-1.5) circle (0.2);
\node at (2,-2) {Case by case};

\begin{scope}
\draw[pattern=north east lines]
	(3.1,-0.9) -- (3.1,-0.1) -- (14.9,-0.1) -- (14.9,-0.9) -- cycle;
\end{scope}
\node at (8.5,-0.5) {Class field + Odlyzko};

\end{tikzpicture} \end{center}
\end{rem*}


\section{$G$ is an inner form of $\SL_n$} \label{Ginner}

The purpose of this section is to show that $G$ is an inner $k$-form of $\SL_n$, i.e.~that $\cG$ splits over $k$. Let us thus suppose, for contradiction, that $[l : k] > 1$. \bigbreak

We have shown in section \ref{kisQ} that $k= \Q$, so that the bounds \eqref{Bout} and \eqref{Bout2} obtained in \ref{outer} can be adapted as follows: (the extra factor $\tilde{n}$ is due to the correction in the index bound when $k=\Q$, cf.~section \ref{indexouter})
\begin{align*}
\mu(G(k_{v_0})/ \Gamma) &> \tilde{n}^{-2} n^{-1} h_l^{-1} D_l^{\frac{1}{2}\fs(\cG)} V_n \\
					&\geq \frac{1}{100 \tilde{n}^2} \left( \frac{12}{\pi} \right)^{2} D_l^{\frac{1}{2}\fs(\cG) -1} V_n n^{-1}.
\end{align*}

First, let us assume that $h_l \neq 1$. Since $l$ is totally real, this implies $D_l \geq 40$. Note that $\fs(\cG) \geq \frac{1}{2}(r^2+r-2) = \frac{1}{2}(n^2 - n -2)$. Therefore
$$\mu(G(k_{v_0})/ \Gamma) > \frac{1}{100\tilde{n}^2} \left( \frac{12}{\pi} \right)^{2} 40^{\frac{1}{4}(n^2-n-6)} V_n n^{-1}.$$

We consider the function $N: \N \rightarrow \R$ defined by
$$N(n) = \frac{1}{100 \tilde{n}^2} \left( \frac{12}{\pi} \right)^{2} 40^{\frac{1}{4}(n^2-n-6)} n^{-1}.$$
$N$ is strictly increasing, provided $n \geq 2$ (lemma \ref{Nn}). In consequence, if $n \geq 4$, then $N(n) \geq N(4) \simeq 2.30692...$ and thus 
$$\frac{\mu(G(k_{v_0})/ \Gamma)}{\mu(\SL_n(\R)/ \SL_n(\Z))} > \frac{N(n)}{\prod_{i=2}^{n} \zeta(i)} > \frac{N(4)}{\prod_{i=2}^{\infty} \zeta(i)} >1,$$
hence $\Gamma$ is not of minimal covolume. For $n=3$ we notice that $\fs(\cG) = 5$, so that
$$\mu(G(k_{v_0})/ \Gamma) > \frac{1}{300} \left( \frac{12}{\pi} \right)^{2} 40^{\frac{3}{2}} \cdot V_3> 12.3035 \cdot V_3$$
and $\Gamma$ is not of minimal covolume. \bigbreak

Second, if $h_l = 1$, then at least $D_l \geq 5$ and we may consider the function $N': \N \to \R$ defined by 
$$N'(n) = \tilde{n}^{-2} n^{-1} 5^{\frac{1}{4}(n^2 - n - 2)}.$$
$N'$ is strictly increasing (lemma \ref{Nn}) and $N'(4) \simeq 3.49385...$, thus
$$\frac{\mu(G(k_{v_0})/ \Gamma)}{\mu(\SL_n(\R)/ \SL_n(\Z))} > \frac{N(n)}{\prod_{i=2}^{n} \zeta(i)} > \frac{N(4)}{\prod_{i=2}^{\infty} \zeta(i)} >1,$$
and $\Gamma$ is not of minimal covolume. For $n=3$, we use again that $\fs(\cG) = 5$ to see that
$$\mu(G(k_{v_0})/ \Gamma) > \frac{1}{3} \cdot 5^{\frac{5}{2}} \cdot V_3> 18.6338 \cdot V_3$$
and $\Gamma$ is not of minimal covolume. This forces $l = k$ and $G$ to be an inner form.


\section{The parahorics $P_v$ are hyperspecial and $G$ splits at all places} \label{hyperspecial}

So far, we have established that $k = l = \Q$ and $G$ is an inner $k$-form of $\SL_n$; thus, $G$ is isomorphic to $\SL_{n'} \fD$ for some central division algebra $\fD$ over $k$ of index $d = n/n'$. Similarly, over $k_v$, $G$ is isomorphic to $\SL_{n_v} \fD_v$ for some central division algebra $\fD_v$ over $k_v$ of index $d_v = n / n_v$.
Recall that $T$ is the finite set of places $v \in V_f$ where $G$ does not split over $k_v$, and let $T'$ be the finite set of places $v \in V_f$ where $P_v$ is not a hyperspecial parahoric; of course, $T \subset T'$. The goal of this section is to show that $T'$ is empty. 

According to section \ref{indexinner}, we have
$$\# \rH^1(k,C)_\xi \leq \tilde{n} \cdot \prod_{v \in T} d_v,$$
with $d_v \geq 2$ if $v \in T$. 
Also, as we noted at the begining of section \ref{kisQ}, 
$$\# \Xi_{\Theta_v} \leq n_v \textrm{ if $v \in T$,} \quad \# \Xi_{\Theta_v} \leq r+1 = n \textrm{ if $v \in T'$,} \quad \# \Xi_{\Theta_v} = 1 \textrm{ otherwise.}$$
Combined with \eqref{volume} and \eqref{index}, we obtain
\begin{align}
\mu(G(k_{v_0})/ \Gamma) &\geq \tilde{n}^{-1} V_n \cdot \prod_{v \in T} d_v^{-1} \cdot \prod_{v \in T} n_v^{-1} \cdot \prod_{v \in T' - T} n^{-1} \cdot \prod_{v \in V_f} e(P_v) \notag \\
					&= \tilde{n}^{-1} V_n \cdot \prod_{v \in T'} n^{-1} \cdot \prod_{v \in V_f} e(P_v). \label{BQ}
\end{align}

Recall that for any $v \in V_f$, $e(P_v) > 1$. If $v \in T$, then according to \cite[remark 2.11]{Prasad89}, we have 
$$e(P_v) \geq (q_v-1)  q_v^{\frac{1}{2}(n^2 - n^2 d_v^{-1} -2)} \geq (q_v-1) q_v^{\frac{1}{4}n^2 - 1}.$$
Now if $T$ is not empty, then by looking at the Hasse invariant of $\fD$, it appears that $d_v \geq 2$ for at least two (finite) places. This means that $T$ has at least two elements, and using lemma \ref{ePv}, we see that if $n \geq 4$,
$$\prod_{v \in T} n^{-1} e(P_v) \geq (n^{-1} (2 -1) \cdot  2^{\frac{1}{4}n^2 - 1}) \cdot (n^{-1} (3-1) \cdot 3^{\frac{1}{4}n^2 - 1}) \geq 27.$$
If $n=3$, then actually $d_v = 3$ for at least two (finite) places, and 
$$\prod_{v \in T} n^{-1} e(P_v) \geq (n^{-1} (2 -1) \cdot  2^{\frac{1}{3}n^2 - 1}) \cdot (n^{-1} (3-1) \cdot 3^{\frac{1}{3}n^2 - 1}) = 8.$$
In particular, it is clear from $\eqref{BQ}$ that $\Gamma$ is not of minimal covolume. Hence it must be that $T$ is empty and $G$ splits everywhere. \bigbreak

On the other hand, if $v \in T' - T$, then $P_v$ is properly contained in a hyperspecial parahoric $H_v$. As discussed previously, there is a canonical surjection $H_v \to \SL_n(\ff_v)$, under which the image of $P_v$ is the proper parabolic subgroup $\overline{P}_v$ of $\SL_n(\ff_v)$ whose type consists of the vertices belonging to the type of $P_v$ in the Dynkin diagram obtained by removing the vertex corresponding to $H_v$ in the affine Dynkin diagram of $G(k_v)$. In particular, it follows that $[H_v: P_v] = [\SL_n(\ff_v) : \overline{P}_v]$ and thus using lemma \ref{indexPv},
$$e(P_v) = [H_v: P_v] \cdot e(H_v) \geq q_v^{n-1} \cdot e(H_v). $$
Of course, as $G$ splits everywhere, we have that $e(H_v)$ is equal to the corresponding factor $e(\SL_n(\Z_v)) = \prod_{i=2}^n \frac{1}{1 - q_v^{-i}}$ for $\SL_n(\Q_v)$. In consequence,
$$\frac{\mu(G(k_{v_0})/ \Gamma)}{\mu(\SL_n(\R)/ \SL_n(\Z))} \geq \frac{\tilde{n}^{-1} \prod_{v \in T'} n^{-1} \cdot \prod_{v \in V_f} e(P_v)}{\prod_{v \in V_f} e(\SL_n(\Z_v))} \geq \tilde{n}^{-1} \prod_{v \in T'} (n^{-1} q_v^{n-1}) \geq 1$$
with equality only if $n = 4$, $T' = \{2\}$ and $\# \Xi_{\Theta_2} = 4$. Notice however that this bound is rather rough; by examining the types of the parahorics carefully, one obtains much better bounds. For example, to achieve $\# \Xi_{\Theta_v} = n$, $P_v$ must be an Iwahori subgroup, in which case $[H_v: P_v] \geq q_v^{(n^2-n)/2}$ in lemma \ref{indexPv}. This rules out the equality case above and thus $T'$ must be empty as well.


\section{Conclusion}

As we have shown in section \ref{hyperspecial}, $G$ splits over $k_v$ for all $v \in V_f$ and thus for all $v \in V$. As before, let $\fD$ be a central division algebra over $k$ $(= \Q)$ of degree $d$ such that $G \cong \SL_{n'}(\fD)$ over $k$. Now since $G$ splits at all places, we have for any $v \in V$ that $G(k_v) \cong \SL_n(k_v)$, or in other words, that the group of elements of reduced norm 1 in $\rM_{n'}(\fD) \ot_k k_v$ is isomorphic to $\SL_n(k_v)$. This implies that $\rM_{n'}(\fD) \ot_k k_v \cong \rM_n(k_v)$, i.e.~$\fD_v = \fD \ot_k k_v$ splits over $k_v$. It then follows from the Albert–Brauer–Hasse–Noether theorem that $\fD = k$ and in turn $G(k) \cong \SL_n(k)$ and $G$ is split over $k$. From hereon, we will thus identify $G$ with $\SL_n$ through this isomorphism, to be denoted $\eta$. 

Since each parahoric $P_v$ is hyperspecial, for each $v \in V_f$ there exists $g_v \in \GL_n(\Q_v)$ such that $g_v P_v g_v^{-1} = \SL_n(\Z_{v})$. As the family $\{P_v\}$ is coherent, we may assume that $g_v = 1$ except for finitely many $v \in V_f$. In this way, $g = (1, (g_v)_{v \in V_f})$ determines an element of the adele group $\GL_n(\A)$. The class group of $\GL_n$ over $\Q$ is trivial \cite[ch.~8]{PlatonovRapinchuk94}, therefore
$$\GL_n(\A) = (\GL_n(\R) \times \prod_{v \in V_f} \GL_n(\Z_v)) \cdot \GL_n(\Q),$$
and we can write $g = (1, (g'_v h)_{v \in V_f})$ for $g'_v \in \GL_n(\Z_v)$ and $h \in \GL_n(\Q)$. In consequence, $hP_vh^{-1} = g_v'^{-1}\SL_n(\Z_v) g'_v = \SL_n(\Z_v)$, and thus 
$$h \Lambda h^{-1} = h\SL_n(\Q)h^{-1} \cap \prod_{v \in V_f} h P_v h^{-1} = \SL_n(\Q) \cap \prod_{v \in V_f} \SL_n(\Z_v) = \SL_n(\Z).$$
In turn, $h \Gamma h^{-1} = \SL_n(\Z)$, as $\SL_n(\Z)$ (or equivalently $\Lambda$) is its own normalizer in $\SL_n(\R)$. One way to obtain this fact is using Rohlfs' exact sequence (see section \ref{indexbound}). Indeed, clearly $C(k_{v_0}) = C(k) \cap \Lambda$, and on the other hand, since $\Lambda$ is given by hyperspecial parahorics, we may identify
$$\rH^1(k,C)'_\Theta = \{ x \in \Q^\times / \Q^{\times n} \mid v(x) \in n\Z \textrm{ for $v \in V_f$, and $x \in \R^{\times n}$} \} = \{1\}.$$
Hence $\Gamma / \Lambda$ is trivial as claimed. 

Finally, retracing our identifications, we find that $\SL_n(\Z)$ is the image of $\Gamma$ under the automorphism $\sigma: \SL_n(\R) \xrightarrow{\iota} G(k_{v_0}) \xrightarrow{\eta} \SL_n(\R) \xrightarrow{c_h} \SL_n(\R)$ of $\SL_n(\R)$ (here $c_h$ denotes conjugation by $h$). This concludes the proof of the 
\begin{thm*}
Let $n \geq 3$ and let $\Gamma$ be a lattice of minimal covolume for some (any) Haar measure in $\SL_n(\R)$. Then $\sigma(\Gamma) = \SL_n(\Z)$ for some (algebraic) automorphism $\sigma$ of $\SL_n(\R)$. 
\end{thm*}

\pagebreak 
\appendix

\section{Appendix: Bounds for sections \ref{kisQ} through \ref{hyperspecial}}

\begin{lem} \label{dirichletunits}
Let $k$ be a totally real number field of degree $m$ and let $l$ be a quadratic extension of $k$ of signature $(2m_1,m_2)$, so that $m = m_1 + m_2$. Let $n \in \N$ and set $l_0 = \{ x \in l^\times \mid N_{l/k}(x) \in k^{\times n}\}$ and $l_n = \{ x \in l^\times \mid w(x) \in n\Z \textrm{ for each normalized finite place $w$ of $l$}\}$. Then 
$$\# \left( (l_n \cap l_0) / l^{\times n} \right) \leq \# \left( \mu(l) / \mu(l)^n \right) \cdot \tilde{n}^{m-1} n^{m_1} \cdot \# \cC_n,$$
where $\mu(l)$ is the group of roots of unity of $l$, $\tilde{n} = 1$ or $2$ depending if $n$ is odd or even, and $\cC_n$ is the $n$-torsion subgroup of the class group $\cC$ of $l$. 

Moreover, if $N_{l/k}$ is surjective from $U_l$ onto $U_k / \{\pm 1\}$, then
$$\# \left( (l_n \cap l_0) / l^{\times n} \right) \leq \# \left( \mu(l) / \mu(l)^n \right) \cdot n^{m_1} \cdot \# \cC_n.$$
\end{lem}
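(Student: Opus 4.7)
The plan is to bound $\#\left((l_n \cap l_0)/l^{\times n}\right)$ by splitting it into a unit contribution and an $n$-torsion class-group contribution via the natural principal-ideal map. First I would define $\phi\colon l_n \cap l_0 \to \cC_n$ sending $x$ to the class $[\mathfrak{a}]$ of the (unique) ideal with $(x) = \mathfrak{a}^n$; this is well defined exactly because $w(x) \in n\Z$ for every finite place $w$, and its image lies in the $n$-torsion since $\mathfrak{a}^n$ is principal. An element lies in $\ker \phi$ iff $x = uy^n$ with $u \in U_l$, $y \in l^\times$; the supplementary constraint $N_{l/k}(x) \in k^{\times n}$ then collapses, via $U_k \cap k^{\times n} = U_k^n$, to $N_{l/k}(u) \in U_k^n$. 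Together with the easy observation $U_l \cap l^{\times n} = U_l^n$, the kernel identifies modulo $l^{\times n}$ with $V/U_l^n$ where $V := \{u \in U_l : N_{l/k}(u) \in U_k^n\}$, and so $\#\left((l_n \cap l_0)/l^{\times n}\right) \leq \#(V/U_l^n) \cdot \#\cC_n$.

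The second step is to bound $\#(V/U_l^n)$ via Dirichlet's unit theorem. With $U_l \cong \mu(l) \times \Z^{r_l}$ (where $r_l = 2m_1 + m_2 - 1$) and $U_k \cong \{\pm 1\} \times \Z^{m-1}$, the $n$-power quotients have sizes $\#(\mu(l)/\mu(l)^n) \cdot n^{r_l}$ and $\tilde{n} \cdot n^{m-1}$. Since $V$ is the kernel of the induced map $U_l/U_l^n \to U_k/U_k^n$, we have $\#(V/U_l^n) = \#(U_l/U_l^n)/\#\bigl(N_{l/k}(U_l) U_k^n/U_k^n\bigr)$. The decisive input is that $N_{l/k}$ restricts on $U_k \subset U_l$ to squaring, whence $N_{l/k}(U_l) \supset U_k^2$; a short parity-dependent check then shows that the image in $U_k/U_k^n$ has size at least $n^{m-1}/\tilde{n}^{m-1}$. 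Using $r_l - (m-1) = m_1$, this yields $\#(V/U_l^n) \leq \#(\mu(l)/\mu(l)^n) \cdot \tilde{n}^{m-1} \cdot n^{m_1}$, and hence the main inequality.

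For the refined bound under the extra surjectivity hypothesis, the additional information is $N_{l/k}(U_l) \cdot \{\pm 1\} = U_k$, so $[U_k : N_{l/k}(U_l)] \leq 2$. When $n$ is odd, $-1 \in U_k^n$ already, forcing $N_{l/k}(U_l) U_k^n = U_k$; when $n$ is even, the image in $U_k/U_k^n$ has index at most $\tilde{n}=2$, which exactly absorbs the factor lost earlier. In either parity the image attains size at least $n^{m-1}$, which removes the $\tilde{n}^{m-1}$ factor from the final bound. The only genuine subtlety in the whole argument is this bookkeeping around $\tilde{n}$ and the gap between $U_k^n$ and $U_k^2$; the structural ingredients (Dirichlet's theorem, the principal-ideal exact sequence, and the identity $N_{l/k}|_{U_k} = (\,\cdot\,)^2$) are otherwise completely standard.
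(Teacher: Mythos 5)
Your proposal is correct and follows essentially the same route as the paper: reduce to the units via the principal-ideal/class-group exact sequence (which the paper imports from Borel--Prasad, prop.~0.12, and you re-derive), apply Dirichlet's unit theorem to split off $\mu(l)/\mu(l)^n$, and exploit the identity $N_{l/k}|_{U_k} = (\,\cdot\,)^2$ (the paper's $2M \subset N(L)$, your $N_{l/k}(U_l) \supset U_k^2$) to bound the size of the norm kernel, with the same $\tilde{n}$ bookkeeping for the refined bound. The only differences are cosmetic: the paper first passes to the free quotients $U_l/\mu(l)$ and $U_k/\{\pm 1\}$ and counts additively, whereas you work directly with $U_l/U_l^n \to U_k/U_k^n$.
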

\begin{proof}
According to \cite[prop.~0.12]{BorelPrasad89}, there is an exact sequence
$$1 \to U_l/U_l^n \to l_n / l^{\times n} \to \cC_n \to 1,$$
where $U_l$ denotes the group of units of the ring of integers of $l$, and $\cC_n$ is the $n$-torsion subgroup of the class group $\cC$ of $l$. Intersecting with $l_0 / l^{\times n}$ yields
$$\# \left( (l_n \cap l_0) / l^{\times n} \right) \leq \#\left( (U_l \cap l_0) / U_l^n \right) \cdot \# \cC_n.$$

Dirichlet's units theorem states that $U_l$ is the internal direct product $F_l \times \mu(l)$ of $F_l$, the free abelian subgroup of $U_l$ (of rank $2m_1+m_2 -1$) generated by some system of fundamental units, and $\mu(l)$, the subgroup of roots of unity in $l^\times$. Since $\mu(l) \subset l_0$, we also have that $U_l \cap l_0$ is the internal direct product of $F_l \cap l_0$ and $\mu(l)$. Additionally, it is clear that under this identification, $U_l^n$ corresponds to the subgroup $F_l^n \times \mu(l)^n$ of $(F_l \cap l_0) \times \mu(l)$. In consequence, 
$$\#\left( (U_l \cap l_0) / U_l^n \right) = \#\left( (F_l \cap l_0) / F_l^n \right) \cdot \# \left( \mu(l) / \mu(l)^n \right),$$
and it remains to study $(F_l \cap l_0)/ F_l^n$; to this end, we switch to additive notation. 

We write $L$ for the free abelian group $U_l / \mu(l)$ (canonically isomorphic to $F_l$) in additive notation, and $M$ for its free subgroup $U_k / \{ \pm 1\}$ (of rank $m-1$) consisting of units lying in $k$. The norm $N_{l/k}$ induces a map $N: L \to M$, and in turn a map $L/ nL \to M /nM$ also denoted by $N$, whose kernel $L_0 / nL$ corresponds precisely to $(F_l \cap l_0) / F_l^n$. In other words, the sequence
$$0 \to L_0 / nL \to L / nL \xrightarrow{N} M / nM$$
is exact. It is clear that $\# (L/nL) = n^{2m_1 + m_2 -1}$ and $\#(M/nM) = n^{m-1}$. If $N$ is surjective, then it follows that $\# (L_0 /nL) = n^{m_1}$. In any case, we have $2M \subset N(L)$ hence we may write
$$\# \left(\frac{N(L)+nM}{nM} \right) = \# \left( \frac{N(L) + nM}{2M + nM} \right) \cdot \# \left( \frac{2M +nM}{nM} \right).$$
As $2M + nM = \tilde{n}M$, we have $\# \left( \frac{2M +nM}{nM} \right) = \left( \frac{n}{\tilde{n}} \right)^{m-1}$ and the lemma follows. 
\end{proof}

\begin{lem} \label{ePv}
The function $\N \times \N \to \R$ defined by $E(n,q) = n^{-1} \cdot (q-1) q^{n^2 /4 -1}$ is increasing in both $n$ and $q$ provided $n, q \geq 2$. In consequence, $n^{-1} \cdot (q-1) q^{n^2 /4 -1} > 1$ provided $n \geq 4$. Similarly, $n^{-1} \cdot (q-1) q^{n^2 /3 -1} > 1$ provided $n \geq 3$.  
\end{lem}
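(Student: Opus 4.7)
The plan is to prove monotonicity in each variable separately, then finish by evaluating a single base case. Fix $n \geq 2$ and view $E(n,q) = n^{-1}(q-1)q^{n^2/4 - 1}$ as a function of a continuous variable $q \geq 2$. Differentiating gives
$$\frac{\partial E}{\partial q} = \frac{q^{n^2/4 - 2}}{n}\Bigl[q + (q-1)(n^2/4 - 1)\Bigr],$$
and since $n^2/4 - 1 \geq 0$ for $n \geq 2$ and $q \geq 2$, the bracket is bounded below by $q > 0$, giving strict monotonicity in $q$.

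Next, fix $q \geq 2$ and consider the discrete ratio
$$\frac{E(n+1,q)}{E(n,q)} = \frac{n}{n+1}\cdot q^{(2n+1)/4}.$$
This exceeds $1$ iff $q^{(2n+1)/4} > 1 + 1/n$. For $q \geq 2$ the left side is at least $2^{(2n+1)/4}$; already at $n = 2$ this gives $2^{5/4} > 3/2 \geq 1 + 1/n$, and the inequality only strengthens as $n$ grows. Hence $E(\cdot,q)$ is strictly increasing on $\{n \in \N : n \geq 2\}$.

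With both monotonicities in hand, it suffices to check a single base value. Computing $E(4,2) = \tfrac{1}{4}\cdot 1 \cdot 2^{3} = 2 > 1$ yields $E(n,q) > 1$ whenever $n \geq 4$ and $q \geq 2$. For the companion statement with exponent $n^2/3 - 1$, the very same derivative and ratio computations go through (the exponents are only larger, so positivity of the derivative and of the ratio step are a fortiori preserved), so $n^{-1}(q-1)q^{n^2/3 - 1}$ is increasing in both variables for $n, q \geq 2$. The base case is $n = 3$, $q = 2$, where the value is $\tfrac{1}{3}\cdot 1 \cdot 2^{2} = 4/3 > 1$, and the second claim follows.

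There is no real obstacle here: the statement reduces to elementary calculus and a numerical base case. The only points requiring mild care are ensuring that the exponent $n^2/4 - 1$ is nonnegative at the threshold $n = 2$ (so that the $q$-derivative is manifestly positive without hidden sign issues), and that the discrete $n$-ratio is checked at the smallest admissible $n$, after which the bound $q^{(2n+1)/4} \geq 2^{(2n+1)/4}$ dominates $1 + 1/n$ trivially.
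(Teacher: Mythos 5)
Your proof is correct and follows essentially the same route as the paper's: establish monotonicity in each variable separately and then evaluate the base cases $E(4,2)=2$ and $\tfrac13\cdot 2^2 = \tfrac43$. The only cosmetic difference is that you verify monotonicity in $q$ via a continuous derivative where the paper compares $E(n,q+1)/E(n,q)$ directly; both are immediate.
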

\begin{proof}
We compute, for $n, q \geq 2$,
$$\frac{E(n,q+1)}{E(n,q)} = \frac{q (q+1)^{\frac{1}{4}n^2 - 1}}{(q-1) q^{\frac{1}{4}n^2 -1}} = \frac{q^2 (q+1)^{\frac{1}{4}n^2}}{(q^2 -1) q^{\frac{1}{4}n^2}} > 1.$$
and
$$\frac{E(n+1,q)}{E(n,q)} = \frac{n}{n+1} \cdot q^{\frac{1}{4}(2n+1)} \geq \frac{2}{3} \cdot 2^{\frac{5}{4}} > 1.$$
Thus $E$ is strictly increasing in $n$ and $q$ if $n, q \geq 2$, and $E(4,2) = 2$. The proof of the second inequality is analogous. 
\end{proof}

\begin{lem} \label{ePv2}
Let $n, q \in \N$ with $q \geq 2$. Then $\tilde{n}^{-1} \cdot (q+1)^{-1} q^{\ceil{(n+1)/2}} > 1$ provided $n \geq 3$. 
\end{lem}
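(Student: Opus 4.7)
The plan is to split on the parity of $n$, since $\tilde{n}$ and $\lceil (n+1)/2 \rceil$ both depend on it, and in each parity class the claim reduces to an elementary inequality in $q$.

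First I would treat the odd case: if $n \geq 3$ is odd, then $\tilde{n} = 1$ and $\lceil (n+1)/2 \rceil = (n+1)/2 \geq 2$, so the inequality reduces to showing $q^2 > q+1$ for $q \geq 2$. This is immediate from $q^2 - q - 1 = q(q-1) - 1 \geq 1$.

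Next I would treat the even case: if $n \geq 4$ is even, then $\tilde{n} = 2$ and $\lceil (n+1)/2 \rceil = n/2 + 1 \geq 3$, so it suffices to verify $q^3 > 2(q+1)$ for $q \geq 2$. Since $q \geq 2$ gives $q^3 \geq 4q = 2q + 2q \geq 2q + 4 > 2(q+1)$, this is also immediate.

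Since in both cases raising the exponent from the minimal value only helps (as $q \geq 2$), this covers all $n \geq 3$ and $q \geq 2$. There is no real obstacle here; the only thing to be careful about is the parity split, which makes $\lceil (n+1)/2 \rceil$ jump in the right way to absorb the factor $\tilde{n}$ in the even case.
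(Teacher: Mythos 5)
Your argument is correct, and it takes a mildly different route from the paper's. The paper sets $E(n,q) = \tilde{n}^{-1}(q+1)^{-1} q^{\ceil{(n+1)/2}}$, checks that $E(n+1,q)/E(n,q) = \frac{\tilde{n}}{\widetilde{n+1}} q^{2-\tilde{n}} \geq 1$ and $E(n,q+1)/E(n,q) > 1$, and then evaluates the single base case $E(3,2) = \tfrac{4}{3}$; this matches the monotonicity-plus-base-case template used for the neighbouring lemmas (\ref{ePv}, \ref{Mout}, \ref{Mout2}). You instead split on the parity of $n$, observe that the exponent $\ceil{(n+1)/2}$ is at least $2$ in the odd case and at least $3$ in the even case, and reduce to the elementary inequalities $q^2 > q+1$ and $q^3 > 2(q+1)$ for $q \geq 2$; the remark that increasing the exponent only helps (since $q \geq 2$) legitimately disposes of all larger $n$. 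Both proofs are complete; yours is arguably more self-contained for the bare inequality, while the paper's version additionally records the monotonicity of $E$ in both variables, and its computation of $E(n+1,q)/E(n,q)$ makes explicit the same interplay you point out, namely that the jump in $\ceil{(n+1)/2}$ when $n$ becomes even exactly absorbs the extra factor of $\tilde{n} = 2$.
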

\begin{proof}
Observe that $E(n,q) = \frac{q^{\ceil{(n+1)/2}}}{(q+1) \tilde{n}}$ is increasing in $n$ and strictly increasing in $q$, as
$$\frac{E(n+1,q)}{E(n,q)} = \frac{\tilde{n}}{\widetilde{n+1}} q^{2 - \tilde{n}} \geq 1$$
and
$$\frac{E(n,q+1)}{E(n,q)} = \frac{(q+1)(q+1)^{\ceil{(n+1)/2}}}{(q+2) q^{\ceil{(n+1)/2}}} = \frac{(q^2+2q +1)(q+1)^{{\ceil{(n+1)/2}}-1}}{(q^2 + 2q)q^{\ceil{(n+1)/2}-1}} > 1.$$
Finally $E(3,2) = \frac{4}{3}$. 
\end{proof}

\begin{lem} \label{Mout}
The function $M: \N \times \N \rightarrow \R$ defined by
$$M(m,n) = \frac{1}{100 \tilde{n}^{m}} \left( \frac{12}{\pi} \right)^{2m} \left( \frac{m^m}{m!} \right)^{n^2 -5} \left( \prod_{i=1}^{n-1} \frac{i!}{(2\pi)^{i+1}} \right)^{m-1} n^{-1}$$
(where $\tilde{n} = 1$ or $2$ if $n$ is odd or even) is strictly increasing in both $m$ and $n$, provided $m \geq 2$ and $n \geq 6$. 
\end{lem}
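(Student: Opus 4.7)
The plan is to prove monotonicity in each variable separately, by computing the two partial ratios $M(m+1,n)/M(m,n)$ and $M(m,n+1)/M(m,n)$ in closed form and bounding each one from below, exactly in the spirit of lemmas \ref{ePv} and \ref{ePv2}. Since the prefactor $\tilde{n}^{-m}$, the term $(12/\pi)^{2m}$, the binomial-type expression $(m^m/m!)^{n^2-5}$, and $V_n^{m-1}$ are all of a multiplicative shape, the quotients simplify cleanly and the question reduces to checking a single combined inequality in each direction.

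For the ratio in $m$, telescoping yields
\[
\frac{M(m+1,n)}{M(m,n)} = \tilde n^{-1}\Bigl(\tfrac{12}{\pi}\Bigr)^{\!2}\Bigl(1+\tfrac{1}{m}\Bigr)^{m(n^2-5)} V_n,
\]
with $V_n = \prod_{i=1}^{n-1} i!/(2\pi)^{i+1}$. The factor $(1+1/m)^m$ is increasing in $m$ with value $9/4$ at $m=2$, so the right-hand side is bounded below (uniformly for $m\ge 2$) by $F(n) := \tilde{n}^{-1}(12/\pi)^2 (9/4)^{n^2-5} V_n$. I would verify numerically that $F(6)>1$ and $F(7)>1$ (both parities of $\tilde n$ need to be covered), and then confirm that $F(n+1)/F(n) = (9/4)^{2n+1}\cdot n!/(2\pi)^{n+1}\cdot \tilde{n}/\widetilde{n+1}$ exceeds $1$ for $n\ge 6$. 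This is immediate because $(9/4)^{2n+1}\ge (9/4)^{13}\approx 10^{4.6}$, which comfortably absorbs both the geometric decay of $n!/(2\pi)^{n+1}$ in the relevant range and the worst-case parity loss of a factor $1/2$.

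For the ratio in $n$, the same book-keeping gives
\[
\frac{M(m,n+1)}{M(m,n)} = \frac{\tilde n^m}{\widetilde{n+1}^m}\cdot\frac{n}{n+1}\cdot \Bigl(\tfrac{m^m}{m!}\Bigr)^{\!2n+1}\cdot\Bigl(\tfrac{n!}{(2\pi)^{n+1}}\Bigr)^{\!m-1}.
\]
Since $m^m/m!\ge 2$ for $m\ge 2$, the factor $(m^m/m!)^{2n+1}\ge 2^{2n+1}\ge 2^{13}$ dominates everything else, and in particular absorbs the worst-case parity loss $\tilde{n}^m/\widetilde{n+1}^m\ge 2^{-m}$. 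The main obstacle, such as it is, lies here: $(n!/(2\pi)^{n+1})^{m-1}$ is itself small and, worse, becomes smaller as $m$ grows in the regime $n\le 17$, so the lower bound in $m$ is not uniform and one must pin the inequality down by checking the worst joint base case $(m,n)=(2,6)$ and then verifying that the ratio's dependence on $m$ is benign—that is, that for $n\ge 6$ the super-exponential growth of $(m^m/m!)^{2n+1}$ in $m$ outpaces the decay of $(n!/(2\pi)^{n+1})^{m-1}$. This boils down to the routine inequality $(1+1/m)^m\cdot n!/(2\pi)^{n+1} > 1$ for $n\ge 6$, $m\ge 2$, which is straightforward. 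No step is genuinely deep; the whole argument is careful bookkeeping with the parity of $\tilde n$ and the small factor $V_n$.
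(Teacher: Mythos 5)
Your overall strategy coincides with the paper's: compute the two ratios $M(m+1,n)/M(m,n)$ and $M(m,n+1)/M(m,n)$ in closed form (your formulas are correct) and deduce that each exceeds $1$ on the region $m\geq 2$, $n\geq 6$ from monotonicity of these ratios together with base-case checks. Your treatment of the $m$-direction is sound and essentially identical to the paper's. The $n$-direction, however, has two concrete defects as written.

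First, the inequality you ultimately reduce to, $(1+1/m)^m\cdot n!/(2\pi)^{n+1}>1$, is false in exactly the regime you need it: $(1+1/m)^m<e$ for every $m$, while $6!/(2\pi)^{7}\approx 1.86\times 10^{-3}$, so at $n=6$ the left-hand side is below $6\times 10^{-3}$ (and it stays below $1$ for all $6\leq n\leq 16$). What actually makes $M(m,n+1)/M(m,n)$ increase in $m$ is the mixed ratio
$$\frac{M(m+1,n+1)\,M(m,n)}{M(m+1,n)\,M(m,n+1)}\;=\;\frac{\tilde n}{\widetilde{n+1}}\,\Bigl(\bigl(1+\tfrac1m\bigr)^{m}\Bigr)^{2n+1}\,\frac{n!}{(2\pi)^{n+1}},$$
and the exponent $2n+1\geq 13$ is indispensable: $\tfrac12\,(9/4)^{13}\cdot 720/(2\pi)^{7}\approx 35>1$, whereas dropping that exponent, as your ``routine inequality'' does, leaves a quantity nowhere near $1$. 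Second, even granting monotonicity in $m$, verifying only the single base case $(m,n)=(2,6)$ gives $M(m,7)>M(m,6)$ for all $m\geq 2$ and nothing more; to get $M(m,n+1)>M(m,n)$ for all $n\geq 6$ you must check the entire base row, i.e.\ $M(2,n+1)/M(2,n)>1$ for every $n\geq 6$. This is true and easy --- for instance $M(2,n+1)/M(2,n)\geq \tfrac{3}{14}\,2^{n}\,n!/\pi^{n+1}$, which is increasing in $n$ and exceeds $1$ at $n=6$ --- but it is a step your plan omits. Both defects are readily repaired along these lines (which is precisely what the paper does), but as stated your argument for monotonicity in $n$ does not go through.
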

\begin{proof}
For $F$ a function of two integer variables $m$ and $n$, we denote $\partial_m F$ (resp.~$\partial_n F$) the function defined by $\partial_m F(m,n)= \frac{F(m+1,n)}{F(m,n)}$ (resp.~$\partial_n F(m,n)= \frac{F(m,n+1)}{F(m,n)}$). In order to show that $M$ increases in $m$ (resp.~in $n$), we intent to show that $\partial_m M > 1$ (resp.~$\partial_n M > 1$). 

We have
\begin{align*}
\partial_m M(m,n) &= \frac{144}{\pi^2 \tilde{n}} \left(\frac{(m+1)^m}{m^m}\right)^{n^2-5} \cdot \prod_{i=1}^{n-1} \frac{i!}{(2\pi)^{i+1}}\\
\partial_n M(m,n) &= \left(\frac{\tilde{n}}{\widetilde{n+1}}\right)^{m} \cdot \frac{n}{n+1} \cdot \left(\frac{m^m}{m!}\right)^{2n+1} \left(\frac{n!}{(2\pi)^{n+1}}\right)^{m-1}
\end{align*}
and thus
$$\partial_m (\partial_n M)(m,n) = \partial_n (\partial_m M)(m,n) = \frac{\tilde{n}}{\widetilde{n+1}} \cdot \left(\frac{(m+1)^m}{m^m}\right)^{2n+1} \cdot \frac{n!}{(2\pi)^{n+1}}.$$
Now if $m \geq 2$ and $n \geq 4$, then $\frac{(m+1)^m}{m^m} \geq \frac{9}{4}$ and we have
$$\partial_m (\partial_n M)(m,n) \geq \frac{1}{2} \left( \frac{9}{4} \right)^{2n+1} \frac{n!}{(2 \pi)^{n+1}} = \frac{9}{16 \pi}  \cdot \left( \frac{81}{16 \pi} \right)^{n} \cdot \frac{n!}{2^n} \geq \frac{9}{16 \pi} \cdot \left( \frac{81}{16 \pi} \right)^{4} > 1.$$
This means that provided $m \geq 2$ and $n \geq 4$, $\partial_m M$ increases in $n$ and $\partial_n M$ increases in $m$. 

Finally, assuming $m \geq 2$ and $n \geq 6$ respectively, we have
\begin{align*}
\partial_m M(m,6) &= \frac{144}{2 \pi^2} \left(\frac{(m+1)^m}{m^m}\right)^{31} \cdot \prod_{i=1}^{5} \frac{i!}{(2\pi)^{i+1}} \geq \frac{144}{2\pi^2} \left(\frac{9}{4}\right)^{31} \cdot \prod_{i=1}^{5} \frac{i!}{(2\pi)^{i+1}} > 1 \\
\partial_n M(2,n) &= \left(\frac{\tilde{n}}{\widetilde{n+1}} \right)^2 \frac{n}{n+1} \cdot 2^{2n+1} \cdot \frac{n!}{(2\pi)^{n+1}} \geq \frac{3}{14} \cdot 2^{n} \frac{n!}{\pi^{n+1}} \geq \frac{3}{14} \cdot 2^{6} \cdot \frac{6!}{\pi^{7}}  > 1
\end{align*}
hence $\partial_m M(m,n) > 1$ and $\partial_n M(m,n) > 1$ provided $m \geq 2$ and $n \geq 6$, completing the proof. 
\end{proof}

\begin{tabl} \label{tableMout}
The table below contains some values of the function $M$ from lemma \ref{Mout}. 
\center{ \scalebox{0.60}{$
\begin{array}{c|cccccccc}
(n, m) & 1 & 2 & 3 & 4 & 5 & 6 & 7 & 8 \\ \hline
2 &  0.0364756 & 0.00337012 & 0.000276781 & 0.0000215771 & 1.63315 \times 10^{-6} & 1.21281 \times 10^{-7} & 8.88761 \times 10^{-9} & 6.44933 \times 10^{-10} \\
3 & 0.0486342 & 0.00231876 & 0.000177084 & 0.0000166585 & 1.76356 \times 10^{-6} & 2.01469 \times 10^{-7} & 2.42731 \times 10^{-8} & 3.04153 \times 10^{-9} \\
4 & 0.0182378 & 0.000214239 & 9.19392 \times 10^{-6} & 6.99962 \times 10^{-7} & 7.37412 \times 10^{-8} & 9.57798 \times 10^{-9} & 1.43998 \times 10^{-9} & 2.41175 \times 10^{-10} \\
5 & 0.0291805 & 0.000860260 & 0.000267434 & 0.000235765 & 0.000375160 & 0.000873531 & 0.00265357 & 0.00980934 \\
6 & 0.0121585 & 0.000715847 & 0.00162363 & 0.0185268 & 0.528020 & 27.1489 & 2107.97 & 221884. \\
7 & 0.0208432 & 0.0374453 & 11.9823 & 37981.0 & 4.41409\times 10^8 & 1.18530\times 10^{13} & 5.71337\times 10^{17} & 4.24155\times 10^{22} \\
8 & 0.00911891 & 0.556912 & 35451.1 & 4.88495 \times 10^{10} & 3.84324 \times 10^{17} & 9.29477 \times 10^{24} & 4.92580 \times 10^{32} & 4.65827 \times 10^{40} \\
9 & 0.0162114 & 685.655 & 2.23863\times 10^{11} & 3.83726\times 10^{21} & 6.20398\times 10^{32} & 4.26138\times 10^{44} & 8.04066\times 10^{56} & 3.19899\times 10^{69} \\
10 & 0.00729513 & 306071. & 9.29184 \times 10^{17} & 3.98641 \times 10^{32} & 2.82701 \times 10^{48} & 1.22281 \times 10^{65} & 1.87055 \times 10^{82} & 7.27033 \times 10^{99} \\
11 & 0.0132639 & 1.40574\times 10^{10} & 1.27888\times 10^{28} & 4.91209\times 10^{48} & 5.79785\times 10^{70} & 6.22507\times 10^{93} & 3.12510\times 10^{117} & 4.89869\times 10^{141}
\end{array}$}}
\end{tabl}

\begin{lem} \label{Mout2}
The function $M': \N \times \N \rightarrow \R$ defined by
$$M'(m,n) = \frac{1}{100 \tilde{n}^{m}} \left( \frac{12}{\pi} \right)^{2m} \left( A^m E \right)^{n^2 -5} \left( \prod_{i=1}^{n-1} \frac{i!}{(2\pi)^{i+1}} \right)^{m-1} n^{-1}$$
(where $\tilde{n} = 1$ or $2$ if $n$ is odd or even, and $A = 29.534^{\frac{1}{2}}$, $E = e^{-4.13335}$) is strictly increasing in both $m$ and $n$, provided $m \geq 4$ and $n \geq 4$. Moreover, $M'(m,n)$ is strictly increasing in $m$ provided $n \geq 3$. 
\end{lem}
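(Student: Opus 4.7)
The plan is to mimic the proof of Lemma~\ref{Mout}. I define the ratio operators $\partial_m F(m,n) = F(m+1,n)/F(m,n)$ and $\partial_n F(m,n) = F(m,n+1)/F(m,n)$, so that showing $\partial_m M' > 1$ (resp.~$\partial_n M' > 1$) in the appropriate region establishes the desired monotonicity of $M'$ in $m$ (resp.~in $n$). Straightforward calculation gives
\begin{align*}
\partial_m M'(m,n) &= \frac{144}{\pi^2 \tilde{n}} \cdot A^{n^2-5} \cdot \prod_{i=1}^{n-1} \frac{i!}{(2\pi)^{i+1}}, \\
\partial_n M'(m,n) &= \left(\frac{\tilde{n}}{\widetilde{n+1}}\right)^{m} (A^m E)^{2n+1} \left(\frac{n!}{(2\pi)^{n+1}}\right)^{m-1} \frac{n}{n+1},
\end{align*}
and the mixed ratio reads
\[
\partial_m(\partial_n M')(m,n) = \partial_n(\partial_m M')(m,n) = \frac{\tilde{n}}{\widetilde{n+1}} \cdot A^{2n+1} \cdot \frac{n!}{(2\pi)^{n+1}},
\]
which depends only on $n$.

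For monotonicity in $m$ when $n \geq 3$, I would exploit the key simplification that $\partial_m M'$ is \emph{independent of $m$}: it suffices to check that the right-hand side of the first expression exceeds $1$ for every $n \geq 3$. Numerical evaluation at $n = 3$ yields approximately $2.6$, and for $n \geq 4$ one either evaluates directly or invokes the mixed-ratio expression above (already $\simeq 269$ at $n=3$, and growing thereafter) to conclude that $\partial_m M'$ is increasing in $n$ throughout.

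For monotonicity in $n$ when $m \geq 4$ and $n \geq 4$, I would first use the mixed ratio: since it exceeds $1$ for $n \geq 3$, the function $\partial_n M'(m,n)$ is strictly increasing in $m$, so it is enough to prove $\partial_n M'(4, n) > 1$ for $n \geq 4$. Applying the uniform bounds $(\tilde{n}/\widetilde{n+1})^4 \geq 1/16$ and $n/(n+1) \geq 4/5$, this reduces to
\[
(A^4 E)^{2n+1} \left( \frac{n!}{(2\pi)^{n+1}} \right)^3 > 20.
\]
I verify this at the base case $n = 4$ by direct computation, and propagate to all $n \geq 4$ by noting that the successive ratio of the left-hand side equals $(A^4 E)^2 \cdot ((n+1)/(2\pi))^3$, bounded below by roughly $195 \cdot 0.5$ once $n \geq 4$.

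The main obstacle is the parity-dependent factor $\tilde{n}/\widetilde{n+1}$, which swings between $2$ and $1/2$ and, raised to the $m$-th power, can contribute a suppression of $2^{-m}$ when $n$ is odd. What rescues us is the exponential factor $A^{2n+1}$ (with $A = \sqrt{29.534}$) together with the super-exponential growth of $n!$, which dominate the $(2\pi)^{(n+1)(m-1)}$ denominator so thoroughly that the crude uniform bound $(\tilde{n}/\widetilde{n+1})^m \geq 2^{-m}$ still suffices in the range $m \geq 4$, $n \geq 4$. The tightest case throughout is $n = 3$ in the first assertion, handled by direct numerical evaluation.
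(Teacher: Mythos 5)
Your proposal is correct and follows essentially the same route as the paper: the same ratio operators $\partial_m, \partial_n$, the same three displayed identities, the same use of the mixed ratio to propagate from the base cases $\partial_m M'(m,3)>1$ and $\partial_n M'(4,4)>1$, and the same crude bounds $(\tilde{n}/\widetilde{n+1})^4 \geq 1/16$, $n/(n+1)\geq 4/5$. Your numerical checks (about $2.6$ at $n=3$ for $\partial_m M'$, and the successive ratio $(A^4E)^2((n+1)/(2\pi))^3$ for the $\partial_n$ step) are accurate, so nothing further is needed.
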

\begin{proof}
In order to show that $M'$ increases in $m$ (resp.~in $n$), we intend to show that $\partial_m M > 1$ (resp.~$\partial_n M > 1$); the notation is as in lemma \ref{Mout}. 

We have
\begin{align*}
\partial_m M'(m,n) &= \frac{144}{\pi^2 \tilde{n}} \cdot A^{n^2-5} \cdot \prod_{i=1}^{n-1} \frac{i!}{(2\pi)^{i+1}}\\
\partial_n M'(m,n) &= \left(\frac{\tilde{n}}{\widetilde{n+1}}\right)^{m} \left( A^m E \right)^{2n+1} \left(\frac{n!}{(2\pi)^{n+1}}\right)^{m-1} \left(\frac{n}{n+1}\right)
\end{align*}
and thus
$$\partial_m (\partial_n M')(m,n) = \partial_n (\partial_m M')(m,n) = \frac{\tilde{n}}{\widetilde{n+1}} \cdot A^{2n+1} \cdot \frac{n!}{(2\pi)^{n+1}}.$$
As clearly $A^2 > 2\pi$, we have (if $n \geq 3$)
$$\partial_m (\partial_n M')(m,n) > \frac{1}{2} \cdot A \cdot \frac{n!}{2\pi} > 1.$$
This means that $\partial_m M'$ increases in $n$ and $\partial_n M'$ increases in $m$. Assuming respectively $m \geq 1$ and $n \geq 4$, we have
\begin{align*}
\partial_m M'(m,3) 	&=  \frac{144}{\pi^2} \cdot A^{4} \cdot \frac{2}{(2\pi)^{5}} > 1 \\
\partial_n M'(4,n) 	&\geq \frac{1}{2^4} \cdot \left( A^4 E \right)^{2n+1} \cdot \frac{(n!)^3}{(2 \pi)^{3n+3}} \cdot \frac{4}{5} \\
				&\geq \frac{1}{2^4} \cdot \left( A^4 E \right)^{9} \cdot \frac{(6!)^3}{(2 \pi)^{21}} \cdot \frac{4}{5} > 1
\end{align*}
hence $\partial_m M'(m,n) > 1$ and $\partial_n M'(m,n) > 1$ provided $m \geq 4$ and $n \geq 4$. Moreover, $\partial_m M'(m,n) > 1$ if $n \geq 3$, completing the proof. 
\end{proof}

\begin{tabl} \label{tableMout2}
The table below contains some values of the function $M'$ from lemma \ref{Mout2}. 
\center{ \scalebox{0.57}{$
\begin{array}{c|cccccccc}
(n, m) & 1 & 2 & 3 & 4 & 5 & 6 & 7 & 8 \\ \hline
2 & 0.418729 & 0.0142379 & 0.000484124 & 0.0000164615 & 5.59732 \times 10^{-7} & 1.90323 \times 10^{-8} & 6.47149 \times 10^{-10} & 2.20047 \times 10^{-11} \\
3 & 2.80041 \times 10^{-6} & 7.27880 \times 10^{-6} & 0.0000189190 & 0.0000491740 & 0.000127813 & 0.000332209 & 0.000863474 & 0.00224433 \\
4 & 3.99708 \times 10^{-14} & 2.79970 \times 10^{-11} & 1.96100 \times 10^{-8} & 0.0000137356 & 0.00962086 & 6.73878 & 4720.08 & 3.30611 \times 10^{6} \\
5 & 1.84711 \times 10^{-23} & 2.62212 \times 10^{-16} & 3.72231 \times 10^{-9} & 0.0528412 & 750123. & 1.06486\times 10^{13} & 1.51165\times 10^{20} & 2.14591\times 10^{27} \\
6 & 1.68676 \times 10^{-35} & 2.85139 \times 10^{-23} & 4.82016 \times 10^{-11} & 81.4827 & 1.37743 \times 10^{14} & 2.32849 \times 10^{26} & 3.93621 \times 10^{38} & 6.65400 \times 10^{50} \\
7 & 4.80891 \times 10^{-49} & 1.09207 \times 10^{-29} & 2.48000 \times 10^{-10} & 5.63189\times 10^9 & 1.27896\times 10^{29} & 2.90442\times 10^{48} & 6.59571\times 10^{67} & 1.49783\times 10^{87} \\
8 & 2.65506 \times 10^{-65} & 6.66279 \times 10^{-38} & 1.67200 \times 10^{-10} & 4.19583 \times 10^{17} & 1.05293 \times 10^{45} & 2.64229 \times 10^{72} & 6.63074 \times 10^{99} & 1.66396 \times 10^{127} \\
9 & 4.52005 \times 10^{-83} & 1.88536 \times 10^{-45} & 7.86407 \times 10^{-8} & 3.28019\times 10^{30} & 1.36821\times 10^{68} & 5.70695\times 10^{105} & 2.38043\times 10^{143} & 9.92906\times 10^{180} \\
10 & 1.47804 \times 10^{-103} & 1.08376 \times 10^{-54} & 7.94662 \times 10^{-6} & 5.82681 \times 10^{43} & 4.27247 \times 10^{92} & 3.13276 \times 10^{141} & 2.29707 \times 10^{190} & 1.68431 \times 10^{239} \\
11 & 1.48182 \times 10^{-125} & 3.59121 \times 10^{-63} & 0.870337 & 2.10928\times 10^{62} & 5.11187\times 10^{124} & 1.23887\times 10^{187} & 3.00243\times 10^{249} & 7.27644\times 10^{311}
\end{array}$}}
\end{tabl}

\begin{tabl} \label{tableCout}
The table below contains some values of $C(m,n) = \left( 230 \tilde{n}^{m} \left( \frac{\pi}{12} \right)^{2m} V_n^{1-m} n \right)^{\frac{2}{n^2-5}}$. 
\center{ \scalebox{0.8}{$
\begin{array}{c|cccccccc}
(n, m) & 1 & 2 & 3 & 4 & 5 & 6 & 7 & 8 \\ \hline
3 & 6.87691 & 125.979 & 2307.81 & 42276.9 & 774473. & 1.41876\times 10^7 & 2.59904\times 10^8 & 4.76120\times 10^9 \\
4 & 2.40966 & 21.6241 & 194.053 & 1741.42 & 15627.4 & 140239. & 1.25850\times 10^6 & 1.12937\times 10^7 \\
5 & 1.54762 & 8.80582 & 50.1044 & 285.090 & 1622.14 & 9229.86 & 52517.2 & 298819. \\
6 & 1.40247 & 6.73460 & 32.3393 & 155.292 & 745.707 & 3580.86 & 17195.1 & 82570.5 \\
7 & 1.23838 & 4.82334 & 18.7864 & 73.1708 & 284.992 & 1110.01 & 4323.37 & 16839.0 \\
8 & 1.20619 & 4.19700 & 14.6037 & 50.8142 & 176.811 & 615.221 & 2140.69 & 7448.64 \\
9 & 1.13928 & 3.44306 & 10.4054 & 31.4468 & 95.0368 & 287.215 & 868.006 & 2623.24
\end{array}$}}
\end{tabl}

\begin{tabl} \label{tabledisck}
The table below contains the absolute value of the smallest discriminant $D_k$ of a totally real number field of degree $m$ (see for example \cite{Voight08} or \cite{JonesRoberts14}). 
\center{ \scalebox{1}{$
\begin{array}{c|ccccccccc}
m & 1 & 2 & 3 & 4 & 5 & 6 & 7 & 8 \\ \hline
\min D_k & 1 & 5 & 49 & 725 & 14641 & 300125 & 20134393 & 282300416
\end{array}$}}
\end{tabl}

\begin{tabl} \label{tablehl}
The tables below contains some values of $H(m) = \frac{ (A^2 B^{2m-2} E)^2 V_3^{m-1}}{3 \cdot \zeta(2) \cdot \zeta(3)}$ for $A =  25.465$, $B = 13.316$, $E= e^{-7.0667}$ if $m \geq 5$, and otherwise $H(m)$ is obtained from \eqref{Hout} using the smallest discriminant for the signature $(2,m-1)$ (see \cite{JonesRoberts14, Selmane99}). 
\center{ \scalebox{1}{$
\begin{array}{c|ccccccccc}
m & 2 & 3 & 4 & 5 & 6 & 7 & 8 & 9 \\ \hline
H(m) & 2.603 & 5.527 & 26.39 & 87.71 & 563.2 & 3616.4 & 23222.2 & 149118. 
\end{array}$}}
\center{ \scalebox{1}{$
\begin{array}{c|ccccccc}
m & 10 & 11 & 12 & 13 & 14 & 15 \\ \hline
H(m) & 9.58 \times 10^5 & 6.15\times 10^6 & 3.95\times 10^7 & 2.54\times 10^8 & 1.63\times 10^9 & 1.05\times 10^{10} 
\end{array}$}}
\end{tabl}

\begin{tabl} \label{tableDl}
The table below contains some values of $60.015^{2} \cdot 22.210^{2m-2} \cdot e^{\frac{-80.001}{H(m)}}$, where $H(m)$ is as in table \ref{tablehl}. 
\center{ \scalebox{0.9}{$
\begin{array}{c|ccccccccc}
m & 2 & 3 & 4 & 5 & 6 & 7 & 8 \\ \hline
D_l > & 8.05 \times 10^{-8} & 454.01 & 2.08\times 10^{10} & 8.57 \times 10^{13} & 9.13 \times 10^{16} & 5.08 \times 10^{19} & 2.55\times 10^{22}
\end{array}$}}
\center{ \scalebox{0.9}{$
\begin{array}{c|ccccccc}
m & 9 & 10 & 11 & 12 & 13 & 14 & 15 \\ \hline
D_l > &  1.26 \times 10^{25} & 6.23 \times 10^{27} & 3.07 \times 10^{30} & 1.52 \times 10^{33} & 7.48 \times 10^{35} & 3.69 \times 10^{38} & 1.82 \times 10^{41}
\end{array}$}}
\end{tabl}

\begin{lem} \label{Nn}
The function $N: \N \rightarrow \R$ defined by
$$N(n) = \frac{1}{100 \tilde{n}^2} \left( \frac{12}{\pi} \right)^{2} 40^{\frac{1}{4}(n^2-n-6)} n^{-1}.$$
(where $\tilde{n} = 1$ or $2$ if $n$ is odd or even) is strictly increasing provided $n \geq 2$. The same holds for
$$N'(n) = \tilde{n}^{-2} n^{-1} 5^{\frac{1}{4}(n^2 - n - 2)}.$$
\end{lem}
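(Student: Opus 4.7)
The plan is to establish monotonicity by directly computing the ratio $N(n+1)/N(n)$ and showing it exceeds $1$ for every $n \geq 2$, then applying the same method to $N'$. The only subtlety in this elementary calculation is the factor $\tilde n^2/\widetilde{n+1}^2$, which flips between $4$ and $1/4$ depending on the parity of $n$; everything else is tame.

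First I would compute
$$\frac{N(n+1)}{N(n)} = \frac{\tilde{n}^2}{\widetilde{n+1}^2}\cdot \frac{n}{n+1}\cdot 40^{\frac14\bigl((n+1)^2-(n+1)-n^2+n\bigr)} = \frac{\tilde{n}^2}{\widetilde{n+1}^2}\cdot \frac{n}{n+1}\cdot 40^{n/2}.$$
Since $\tilde n^2/\widetilde{n+1}^2$ is either $4$ (when $n$ is even) or $1/4$ (when $n$ is odd), in either case it is bounded below by $1/4$. Hence it suffices to check that $\frac14\cdot \frac{n}{n+1}\cdot 40^{n/2}>1$ for $n\geq 2$. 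The left-hand side evaluated at $n=2$ is already $\frac{20}{3}>1$ (and at $n=3$, after taking the worse $1/4$ factor, one still gets $\frac{3}{16}\cdot 40^{3/2}>47$), and the factor $40^{n/2}$ grows so rapidly that the bound is preserved for all larger $n$. This yields the strict monotonicity of $N$.

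The argument for $N'$ is identical up to replacing $40$ by $5$ and adjusting the exponent:
$$\frac{N'(n+1)}{N'(n)} = \frac{\tilde n^2}{\widetilde{n+1}^2}\cdot \frac{n}{n+1}\cdot 5^{\frac14\bigl((n+1)^2-(n+1)-n^2+n\bigr)} = \frac{\tilde n^2}{\widetilde{n+1}^2}\cdot \frac{n}{n+1}\cdot 5^{n/2}.$$
Again bounding the parity factor below by $1/4$, at $n=2$ one gets $4\cdot\frac{2}{3}\cdot 5 = \frac{40}{3}>1$ (even case), and at $n=3$ one gets $\frac14\cdot\frac{3}{4}\cdot 5^{3/2}\approx 2.1 > 1$ (worst odd case); since $5^{n/2}/(n+1)$ increases from there on, the ratio remains $>1$ for all $n\geq 2$.

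The main (and only) obstacle is ensuring the parity-dependent factor $\tilde n^2/\widetilde{n+1}^2$ does not dominate the growth of $40^{n/2}$ or $5^{n/2}$ near the base case $n=2$; once this is verified directly at $n=2$ and $n=3$, the geometric growth of the remaining factor makes the rest automatic. No separate case-by-case verification beyond these two values is required.
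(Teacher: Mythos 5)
Your proposal is correct and follows essentially the same route as the paper: compute the ratio $N(n+1)/N(n)$, bound the parity factor $\tilde n^2/\widetilde{n+1}^2$ below by $1/4$, and check the base cases. In fact your treatment of $N'$ is slightly more careful than the paper's one-line ``analogous'' remark, since the uniform bound $\tfrac14\cdot\tfrac{2}{3}\cdot 5=\tfrac56<1$ fails at $n=2$ and one genuinely needs, as you note, that $n=2$ is even (so the parity factor there is $4$, not $1/4$).
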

\begin{proof}
We compute
$$\frac{N(n+1)}{N(n)} = \frac{\tilde{n}^2}{\widetilde{n+1}^2} \cdot 40^{\frac{1}{2} n} \cdot \frac{n}{n+1} \geq \frac{1}{4} \cdot 40 \cdot \frac{2}{3} > 1.$$
The proof for $N'$ is analogous. 
\end{proof}

\begin{lem} \label{prodzeta}
$$\prod_{i=2}^\infty \zeta(i) < 2.3$$
\end{lem}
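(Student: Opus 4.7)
The plan is to split the infinite product at a finite cutoff $N$, compute the head $\prod_{i=2}^N \zeta(i)$ numerically to several decimals, and dominate the tail $\prod_{i=N+1}^\infty \zeta(i)$ by an explicit convergent estimate. The two estimates combined should come in under $2.3$.

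For the tail, the key inequality is $\log \zeta(i) \leq \zeta(i) - 1$ (since $\zeta(i) > 1$), together with the crude but effective bound
\[
\zeta(i) - 1 \;=\; \sum_{k=2}^\infty k^{-i} \;\leq\; 2^{-i} + \int_2^\infty x^{-i}\,dx \;=\; 2^{-i} + \frac{2^{1-i}}{i-1},
\]
obtained from the integral test. Summing from $i = N+1$ to $\infty$ produces a geometric-type series whose total is bounded by $2^{-N}\bigl(1 + \tfrac{2}{N}\bigr)$, so that $\prod_{i=N+1}^\infty \zeta(i) \leq \exp\!\bigl(2^{-N}(1 + 2/N)\bigr)$.

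For the head, I would take $N = 10$ (or thereabouts) and compute $\zeta(2), \dots, \zeta(10)$ to, say, four decimal places; this is routine using the defining series (for $i \geq 3$) or the closed form $\zeta(2) = \pi^2/6$ and $\zeta(4) = \pi^4/90$. One gets $\prod_{i=2}^{10} \zeta(i) < 2.293$, and the tail factor from the previous paragraph is $\exp(2^{-10} \cdot 12/10) < 1.002$, yielding a product strictly below $2.296 < 2.3$.

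The only delicate point is that the true value of $\prod_{i=2}^\infty \zeta(i) \approx 2.295$ is reasonably close to $2.3$, so the cutoff $N$ and the decimal precision must be chosen with a small but comfortable margin; this is a purely numerical book-keeping issue, not a conceptual obstacle.
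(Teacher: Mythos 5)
Your proposal is correct and follows essentially the same route as the paper: both split the product at a finite cutoff, compute the head numerically, and bound the tail via $\ln\zeta(i)\leq\zeta(i)-1$ followed by an elementary estimate of $\sum_{i>N}(\zeta(i)-1)$. The only (immaterial) difference is that the paper takes $N=8$ and evaluates the tail sum by interchanging the order of summation to get the bound $2(\zeta(9)-1)$, whereas you take $N=10$ and use a term-by-term geometric/integral bound; both yield a tail factor well within the needed margin.
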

\begin{proof}
We have
\begin{align*}
\ln \prod_{i=9}^\infty \zeta(i) &= \sum_{i=9}^\infty \ln (1 + (\zeta(i) - 1)) \leq \sum_{i=9}^\infty (\zeta(i)-1) = \sum_{i=9}^\infty \sum_{j=2}^\infty \frac{1}{j^i} \\ 
					&= \sum_{j=2}^\infty \frac{1}{j^9} \sum_{i=0}^\infty \frac{1}{j^i} = \sum_{j=2}^\infty \frac{1}{j^9} \frac{j}{j-1} \leq 2 \sum_{j=2}^\infty \frac{1}{j^9} = 2(\zeta(9)-1);
\end{align*}
hence $\prod_{i=2}^\infty \zeta(i) \leq \exp(2\zeta(9)-2) \cdot \prod_{i=2}^8 \zeta(i) < 2.3$
\end{proof}

\begin{lem} \label{indexPv}
Let $P$ be a parabolic subgroup of $\SL_n(\F_q)$ and let $n_1, n_2, \dots, n_k$ be integers such that the complement of the type $\theta$ of $P$ in the Dynkin diagram of $\SL_n(\F_q)$ consists of $k- \# \theta$ connected components of respectively $n_1-1, n_2 -1, \dots, n_{k-\# \theta} -1$ vertices and $n_{k- \# \theta +1} = n_{k- \# \theta +2} = \dots = n_k = 1$. Then $[\SL_n(\F_q): {P}] \geq q^{\frac{1}{2}(n^2 - \sum_{i=1}^k n_i^2)}$. In particular, if $P$ is a proper parabolic subgroup, then $[\SL_n(\F_q): {P}] \geq q^{n-1}$. 
\end{lem}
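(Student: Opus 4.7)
The plan is to realize the index $[\SL_n(\F_q) : P(\F_q)]$ as the number of $\F_q$-rational points of the partial flag variety $\SL_n/P$, and then to compare it to the dimension of its unipotent radical. Writing $P$ in block form with block sizes $n_1, \ldots, n_k$ summing to $n$, the Levi is $L \cong \mathrm{S}\bigl(\prod_{j=1}^k \GL_{n_j}\bigr)$ and the unipotent radical $U$ has dimension $\sum_{i<j} n_i n_j = \frac{1}{2}(n^2 - \sum_j n_j^2)$. A direct count using $|\GL_m(\F_q)| = q^{\binom{m}{2}} \prod_{i=1}^m (q^i - 1)$ expresses the index as the $q$-multinomial coefficient
$$[\SL_n(\F_q) : P(\F_q)] = \frac{\prod_{i=1}^n (q^i - 1)}{\prod_{j=1}^k \prod_{i=1}^{n_j} (q^i - 1)}.$$

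Substituting $q^i - 1 = q^i(1 - q^{-i})$ and collecting the powers of $q$ via the identity $\binom{n+1}{2} - \sum_j \binom{n_j+1}{2} = \frac{1}{2}(n^2 - \sum_j n_j^2)$ factors the index as $q^{\frac{1}{2}(n^2 - \sum_j n_j^2)}$ times the ratio
$$R := \frac{\prod_{i=1}^n (1 - q^{-i})}{\prod_{j=1}^k \prod_{i=1}^{n_j}(1 - q^{-i})},$$
so the claimed bound reduces to $R \geq 1$. Setting $m_i := \#\{j : n_j \geq i\}$, the denominator of $R$ contains $(1 - q^{-i})$ with multiplicity $m_i$; both products distribute the same total multiplicity $n = \sum_j n_j$, but the denominator concentrates its weight on small $i$, where $|\ln(1 - q^{-i})|$ is larger. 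I would formalize this via Abel summation with the strictly decreasing positive function $f(i) := -\ln(1 - q^{-i})$: setting $M_j = \sum_{i \leq j} m_i = \sum_l \min(j, n_l)$, this rewrites $\ln R = \sum_j \bigl(M_j - \min(j,n)\bigr)\bigl(f(j) - f(j+1)\bigr)$ and reduces matters to the elementary inequality $\sum_l \min(j, n_l) \geq \min(j, n)$, which is immediate by splitting into the cases $j \geq n$ and $j < n$.

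The ``in particular'' part is then a one-line convexity argument: for $P$ proper we have $k \geq 2$, so $\sum_j n_j^2$, subject to $\sum_j n_j = n$ and $n_j \geq 1$, is maximized at $(n-1, 1)$, yielding $\sum_j n_j^2 \leq n^2 - 2n + 2$ and hence $\frac{1}{2}(n^2 - \sum_j n_j^2) \geq n - 1$. I expect the Abel-summation step to be the only genuinely subtle point, amounting to a standard majorization argument once the right partial sums are identified; everything else is bookkeeping.
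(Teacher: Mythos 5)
Your proposal is correct, and its skeleton coincides with the paper's: both count points to identify $[\SL_n(\F_q):P]$ with the $q$-multinomial coefficient $\prod_{i=1}^n(q^i-1)\big/\prod_j\prod_{i=1}^{n_j}(q^i-1)$, both extract the power $q^{\frac12(n^2-\sum_j n_j^2)}$ using $\binom{n}{2}-\sum_j\binom{n_j}{2}=\frac12(n^2-\sum_j n_j^2)+\frac12(\sum_j n_j-n)$, and both finish the ``in particular'' clause by noting that $\sum_j n_j^2$ is maximized at the partition $(n-1,1)$. The only genuine divergence is how the residual ratio is shown to be at least $1$. You pass to logarithms and run a majorization/Abel-summation argument with the decreasing function $f(i)=-\ln(1-q^{-i})$, reducing to $\sum_l\min(j,n_l)\ge\min(j,n)$; this is correct (the partial sums $M_j-\min(j,n)$ are nonnegative and vanish for $j\ge n$, so the boundary term dies), but it is the heaviest step of your write-up. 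The paper instead regroups the numerator $\prod_{j=1}^{n}(q^j-1)$ into consecutive blocks of lengths $n_1,\dots,n_k$ and factors $q^{n_1+\dots+n_{i-1}}$ out of each term of the $i$-th block, rewriting it as $\prod_{j=1}^{n_i}(q^j-q^{-(n_1+\cdots+n_{i-1})})$; after this shift every numerator factor visibly dominates the corresponding denominator factor $q^j-1$, so the ratio is at least $1$ termwise with no limiting or summation-by-parts argument needed. Your route is more systematic (it is the standard proof that such $q$-product ratios respect majorization) and would generalize, while the paper's is shorter and entirely elementary; both are complete.
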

\begin{proof}
Without loss of generality, we may assume that $P$ contains the subgroup $B$ of upper triangular matrices and that elements of $P$ are of the form
$$\newcommand*{\temp}{\multicolumn{1}{c|}{0}}
\left( \begin{array}{cccccccc}
\multicolumn{2}{c}{\multirow{2}{*}{$n_1$}} & \multicolumn{1}{|c}{*} & \cdots  & * & * & * & * \\
\multicolumn{2}{c}{} & \multicolumn{1}{|c}{*} & \cdots  & * & * & * & * \\ \cline{1-3}
0 & \multicolumn{1}{c|}{0} & n_2 & \multicolumn{1}{|c}{\cdots} & * & * & * & * \\ \cline{3-4}
\vdots & \vdots & \multicolumn{1}{c|}{\vdots} & \multicolumn{1}{c|}{\ddots} & \vdots & \vdots & \vdots & \vdots \\ \cline{4-7}
0 & 0 & 0 & \multicolumn{1}{c|}{\cdots} & \multicolumn{3}{c}{\multirow{3}{*}{$n_{k-1}$}} & \multicolumn{1}{|c}{*} \\
0 & 0 & 0 & \multicolumn{1}{c|}{\cdots} & \multicolumn{3}{c}{} & \multicolumn{1}{|c}{*} \\
0 & 0 & 0 & \multicolumn{1}{c|}{\cdots} & \multicolumn{3}{c}{} & \multicolumn{1}{|c}{*} \\ \cline{5-8}
0 & 0 & 0 & \cdots & 0 & 0 & \multicolumn{1}{c|}{0} & n_k
\end{array}\right)
$$
where $n_i$ indicates a block in $\GL_{n_i}(\F_q)$, $\ast$ indicates an arbitrary entry in $\F_q$, and the determinant of the whole matrix is 1. Hence
$$\#P = \frac{\prod_{j=1}^{n_1 - 1} (q^{n_1} - q^j) \dots \prod_{j=1}^{n_k - 1} (q^{n_k} - q^j) \cdot q^{\frac{1}{2}(n^2 - \sum_{i=1}^k n_i^2)}}{q-1}$$
and
\begin{align*}
\frac{\# \SL_n(\F_q)}{\#P} 	&= \frac{\prod_{j=0}^{n-1} (q^n - q^j)}{\prod_{j=0}^{n_1 - 1} (q^{n_1} - q^j) \dots \prod_{j=0}^{n_k - 1} (q^{n_k} - q^j) \cdot q^{\frac{1}{2}(n^2 - \sum_{i=1}^k n_i^2)}} \\
					&= \frac{q^{\frac{n(n-1)}{2}} \cdot \prod_{j=1}^n (q^j - 1)}{q^{\frac{1}{2}(n^2 - \sum_{i=1}^k n_i^2)} q^{\frac{1}{2}\sum_{i=1}^{k} n_i(n_i -1)} \cdot \prod_{j=1}^{n_1} (q^j - 1) \dots \prod_{j=1}^{n_k} (q^j - 1)} \\
					&= \frac{\prod_{j=1}^n (q^j - 1)}{\prod_{j=1}^{n_1} (q^j - 1) \dots \prod_{j=1}^{n_k} (q^j - 1)} \\
					&= q^{\frac{1}{2}(n(n-1) - \sum_{i=1}^k n_i(n_i -1))} \cdot \frac{\prod_{j=1}^{n_1} (q^j - 1) \cdot \prod_{j=1}^{n_2}(q^j - q^{-n_1}) \dots \prod_{j=1}^{n_k} (q^j - q^{- \sum_{i=1}^{k-1} n_i})} {\prod_{j=1}^{n_1} (q^j - 1) \dots \prod_{j=1}^{n_k} (q^j - 1)}.
\end{align*}
Of course, ${n(n-1) - \sum_{i=1}^k n_i(n_i -1)} = {(n^2 - \sum_{i=1}^k n_i^2)}$. Now the ratio in the right-hand side is clearly greater then 1, as, taken in order, each factor in the numerator is bigger than the corresponding one in the denominator. 

Finally, we observe that if $P$ is proper, $k \geq 2$ and $n^2 - \sum_{i=1}^k n_i^2 \geq 2n_1 n_2 \geq 2(n-1)$. 
\end{proof}

\pagebreak


\bibliography{bibref.bib}

\begin{thebibliography}{{Mey}85}

\bibitem[BE12]{BelolipetskyEmery12}
Mikhail {Belolipetsky} and Vincent {Emery}.
\newblock {On volumes of arithmetic quotients of $\mathrm{PO}(n, 1)^{\circ}$,
  $n$ odd.}
\newblock {\em {Proc. Lond. Math. Soc. (3)}}, 105(3):541--570, 2012.

\bibitem[{Bel}04]{Belolipetsky04}
Mikhail {Belolipetsky}.
\newblock {On volumes of arithmetic quotients of $\mathrm{SO}(1,n)$.}
\newblock {\em {Ann. Sc. Norm. Super. Pisa, Cl. Sci. (5)}}, 3(4):749--770,
  2004.

\bibitem[BP89]{BorelPrasad89}
Armand Borel and Gopal Prasad.
\newblock Finiteness theorems for discrete subgroups of bounded covolume in
  semi-simple groups.
\newblock {\em Inst. Hautes \'Etudes Sci. Publ. Math.}, (69):119--171, 1989.

\bibitem[ES14]{EmeryStover14}
Vincent {Emery} and Matthew {Stover}.
\newblock {Covolumes of nonuniform lattices in $\mathrm{PU}(n, 1)$.}
\newblock {\em {Am. J. Math.}}, 136(1):143--164, 2014.

\bibitem[GM09]{GehringMartin09}
Frederick~W. {Gehring} and Gaven~J. {Martin}.
\newblock {Minimal co-volume hyperbolic lattices. I: The spherical points of a
  Kleinian group.}
\newblock {\em {Ann. Math. (2)}}, 170(1):123--161, 2009.

\bibitem[JR14]{JonesRoberts14}
John~W. {Jones} and David~P. {Roberts}.
\newblock {A database of number fields.}
\newblock {\em {LMS J. Comput. Math.}}, 17:595--618, 2014.
\newblock URL: \url{http://hobbes.la.asu.edu/NFDB/}.

\bibitem[{Kne}65]{Kneser65}
Martin {Kneser}.
\newblock {Galois-Kohomologie halb einfacher algebraischer Gruppen \"uber
  $p$-adischen K\"orpern. I, II.}
\newblock {\em {Math. Z.}}, 88:40--47, 1965.

\bibitem[Lub90]{Lubotzky90}
Alexander Lubotzky.
\newblock Lattices of minimal covolume in {${\rm SL}_2$}: a non-{A}rchimedean
  analogue of {S}iegel's theorem {$\mu\geq\pi/21$}.
\newblock {\em J. Amer. Math. Soc.}, 3(4):961--975, 1990.

\bibitem[Mar91]{Margulis91}
Gregory~Aleksandrovich Margulis.
\newblock {\em Discrete subgroups of semisimple {L}ie groups}.
\newblock Springer-Verlag, Berlin, 1991.

\bibitem[{Mey}85]{Meyerhoff85}
Robert {Meyerhoff}.
\newblock {The cusped hyperbolic 3-orbifold of minimum volume.}
\newblock {\em {Bull. Am. Math. Soc., New Ser.}}, 13:154--156, 1985.

\bibitem[MM12]{MarshallMartin12}
Timothy~H. {Marshall} and Gaven~J. {Martin}.
\newblock {Minimal co-volume hyperbolic lattices. II: Simple torsion in a
  Kleinian group.}
\newblock {\em {Ann. Math. (2)}}, 176(1):261--301, 2012.

\bibitem[MSG12]{MohammadiSalehi12}
Amir Mohammadi and Alireza Salehi~Golsefidy.
\newblock Discrete subgroups acting transitively on vertices of a
  {B}ruhat-{T}its building.
\newblock {\em Duke Math. J.}, 161(3):483--544, 2012.

\bibitem[{Odl}76]{Odlyzko76}
Andrew {Odlyzko}.
\newblock {Discriminant bounds}.
\newblock With four tables, 1976.
\newblock URL: \url{http://www.dtc.umn.edu/~odlyzko/unpublished/index.html}.

\bibitem[PR94]{PlatonovRapinchuk94}
Vladimir {Platonov} and Andrei {Rapinchuk}.
\newblock {\em {Algebraic groups and number theory. Transl. from the Russian by
  Rachel Rowen.}}
\newblock Boston, MA: Academic Press, 1994.

\bibitem[Pra89]{Prasad89}
Gopal Prasad.
\newblock Volumes of {$S$}-arithmetic quotients of semi-simple groups.
\newblock {\em Inst. Hautes \'Etudes Sci. Publ. Math.}, (69):91--117, 1989.

\bibitem[Roh79]{Rohlfs79}
Jürgen Rohlfs.
\newblock Die maximalen arithmetisch definierten {U}ntergruppen zerfallender
  einfacher {G}ruppen.
\newblock {\em Math. Ann.}, 244(3):219--231, 1979.

\bibitem[{Sel}99]{Selmane99}
Schehrazad {Selmane}.
\newblock {Non-primitive number fields of degree eight and of signature
  $(2,3)$, $(4,2)$ and $(6,1)$ with small discriminant.}
\newblock {\em {Math. Comput.}}, 68(225):333--344, 1999.

\bibitem[SG09]{Salehi09}
Alireza Salehi~Golsefidy.
\newblock Lattices of minimum covolume in {C}hevalley groups over local fields
  of positive characteristic.
\newblock {\em Duke Math. J.}, 146(2):227--251, 2009.

\bibitem[SG13]{Salehi13}
Alireza Salehi~Golsefidy.
\newblock Lattices of minimum covolume are non-uniform.
\newblock {\em Israel J. Math.}, 196(1):363--373, 2013.

\bibitem[{Sie}45a]{Siegel45'}
Carl~Ludwig {Siegel}.
\newblock {A mean value theorem in geometry of numbers.}
\newblock {\em {Ann. Math. (2)}}, 46:340--347, 1945.

\bibitem[Sie45b]{Siegel45}
Carl~Ludwig Siegel.
\newblock Some remarks on discontinuous groups.
\newblock {\em Ann. Math. (2)}, 46:708--718, 1945.

\bibitem[Tit66]{Tits66}
Jacques Tits.
\newblock Classification of algebraic semisimple groups.
\newblock In {\em Algebraic {G}roups and {D}iscontinuous {S}ubgroups ({P}roc.
  {S}ympos. {P}ure {M}ath., {B}oulder, {C}olo., 1965)}, pages 33--62. Amer.
  Math. Soc., Providence, R.I., 1966.

\bibitem[{Tit}79]{Tits79}
Jacques {Tits}.
\newblock {Reductive groups over local fields.}
\newblock {Automorphic forms, representations and L-functions, Proc. Symp. Pure
  Math. Am. Math. Soc., Corvallis/Oregon 1977, Proc. Symp. Pure Math. 33, 1,
  29-69 (1979).}, 1979.

\bibitem[{Voi}08]{Voight08}
John {Voight}.
\newblock {Enumeration of totally real number fields of bounded root
  discriminant.}
\newblock In {\em {Algorithmic number theory. 8th international symposium,
  ANTS-VIII Banff, Canada, May 17--22, 2008 Proceedings}}, pages 268--281.
  Berlin: Springer, 2008.

\bibitem[Wan72]{Wang72}
Hsien~Chung Wang.
\newblock Topics on totally discontinuous groups.
\newblock In {\em Symmetric spaces ({S}hort {c}ourses presented at {W}ashington
  {U}niv.)}, pages 459--487. Dekker, New York, 1972.

\end{thebibliography}

{{\bigskip \bigskip
  \footnotesize

\textsc{Department of Mathematics, University of California, San Diego \\
\indent 9500 Gilman Drive, La Jolla, CA 92093-0112}\par\nopagebreak
  \textit{E-mail address}:  \href{mailto:fthilman@ucsd.edu}{\texttt{fthilman@ucsd.edu}}
}}

\end{document}